\newcommand{\R}{\mathbb R}
\newcommand{\N}{\mathbb N}
\newcommand{\E}{\mathbb E}
\newcommand{\Pro}{\mathbb P}
\newtheorem{thm}{Theorem}[section]
\newtheorem{cor}{Corollary}[section]
\newtheorem{lemma}{Lemma}[section]
\newtheorem{proposition}{Proposition}[section]
\theoremstyle{remark}
\newtheorem*{rmk}{Remark}
\DeclareMathOperator{\signum}{sgn}
\begin{document}


\title{The square negative correlation property on $\ell_p^n$- balls}

\author[D.\,Alonso]{David Alonso-Guti\'errez}
\address{\'Area de an\'alisis matem\'atico, Departamento de matem\'aticas, Facultad de Ciencias, Universidad de Zaragoza, Pedro Cerbuna 12, 50009 Zaragoza (Spain), IUMA}
\email[(David Alonso)]{alonsod@unizar.es}


\author[J.\,Bernu\'es]{Julio Bernu\'es}
\address{\'Area de an\'alisis matem\'atico, Departamento de matem\'aticas, Facultad de Ciencias, Universidad de Zaragoza, Pedro cerbuna 12, 50009 Zaragoza (Spain), IUMA}
\email[(Julio Bernu\'es)]{bernues@unizar.es}
\subjclass[2010]{Primary 52B09, Secondary 52A23}
\thanks{Partially supported by Spanish grants MTM2016-77710-P and DGA E-64}
\begin{abstract}
In this paper we prove that for any $p\in[2,\infty)$ the $\ell_p^n$ unit ball, $B_p^n$, satisfies the square negative correlation property with respect to every orthonormal basis, while we show it is not always the case for $1\le p\le 2$. In order to do that we regard $B_p^n$ as the orthogonal projection of $B_p^{n+1}$ onto the hyperplane $e_{n+1}^\perp$. We will also study the orthogonal projection of $B_p^n$ onto the hyperplane orthogonal to the diagonal vector $(1,\dots,1)$. In this case, the property holds for all $p\ge 1$ and $n$ large enough.
\end{abstract}

\date{\today}
\maketitle
\section{Introduction and notation}

A random vector $X$ on $\R^n$ is said to satisfy the square negative correlation property (SNCP) with respect to the orthonormal basis $\{\eta_i\}_{i=1}^n$ if for every $i\neq j$
$$
\E\langle X,\eta_i\rangle^2\langle X,\eta_j\rangle^2-\E\langle X,\eta_i\rangle^2\E\langle X,\eta_j\rangle^2\leq 0,
$$
where $\E$ denotes the expectation and $\langle\cdot,\cdot\rangle$ the standard scalar product on $\R^n$.

The study of the SNCP of random vectors uniformly distributed on convex bodies with respect to \textit{some} orthonormal basis appeared in \cite{ABP} in the context of the central limit problem for convex bodies, where the authors showed that for any $p\geq 1$ a random vector uniformly distributed on $B_p^n$ satisfies the SNCP with respect to the canonical basis $\{e_i\}_{i=1}^n$. In \cite{W}, this result was extended to random vectors uniformly distributed on generalized Orlicz balls, also with respect to the canonical basis. A straightforward consequence is that, by the rotational invariance of $B_2^n$, a random vector uniformly distributed on $B_2^n$  satisfies the SNCP with respect to \textit{every} orthonormal basis. The first non-trivial example in this new situation appeared in \cite{AB1}, where it was proved that any random vector uniformly distributed on any hyperplane projection of $B_\infty^n$ satisfies the SNCP with respect to \textit{every} orthonormal basis. In particular,  the SNCP with respect to \textit{every} orthonormal basis is satisfied by $B_\infty^n$ itself.  On the other hand, it is not hard to show that a random vector uniformly distributed on $B_1^n$ does not satisfy the SNCP with respect to every orthonormal basis (see Lemma \ref{lem:ValueOfF} below).

The relation between the SNCP and the central limit problem comes from the fact (see, for instance, \cite[Proposition 1.8] {AB2}) that if a zero-mean random vector uniformly distributed on a convex body $K$ in $\R^n$ satisfies the SNCP with respect to \textit{some} orthonormal basis, then it verifies the so called \textit{General Variance Conjecture} which states:

\textit{There exists an absolute constant $C$ such that for every zero-mean random vector $X$ uniformly distributed on a convex body
$$
\textrm{Var}|X|^2\leq C\lambda_X^2\E|X|^2,
$$
where $\lambda_X^2=\max_{\xi\in S^{n-1}}\E\langle X,\xi\rangle^2$ is the largest eigenvalue of the covariance matrix of $X$ and $Var$ denotes the variance. }

Here $|\cdot|$ denotes the Euclidean norm and $S^{n-1}$ denotes the unit Euclidean sphere $S^{n-1}=\{x\in\R^n\,:\,|x|=1\}$.

Furthermore, \cite[Proposition 1.9] {AB2}, if a zero-mean random vector uniformly distributed on $K$ satisfies the SNCP with respect to \textit{every} orthonormal basis, then $TK$ verifies the General Variance Conjecture for every linear isomorphism $T$ in $\R^n$.

This is a particular case of a well-known conjecture due to Kannan-Lov\'asz-Simonovits (see \cite{AB2}, for detailed explanations on this topic).

 \bigskip

In Section 3 we  study the SNCP on random vectors uniformly distributed on $B_p^n, p\ge 1$, with respect to any orthonormal basis. The main result is the following

\begin{thm}\label{thm:Bpn}
Let $X$ be a random vector uniformly distributed on $B_p^n$, $p\geq 1$, and write $\xi_1=\frac{e_1+e_2}{\sqrt{2}}$, $\xi_2=\frac{e_1-e_2}{\sqrt{2}}$. Let $f: S^{n-1}\times S^{n-1}$ be the function
$$
f(\eta_1,\eta_2)=\E\langle X,\eta_1\rangle^2\langle X,\eta_2\rangle^2-\E\langle X,\eta_1\rangle^2\E\langle X,\eta_2\rangle^2.
$$

Then for every $\eta_1,\eta_2\in S^{n-1}$ such that $\langle\eta_1,\eta_2\rangle=0$ we have,

$$
f(\xi_1,\xi_2)\leq f(\eta_1,\eta_2)\leq f(e_1,e_2),\qquad if \,\,\,\, p\geq 2
$$
$$
f(e_1,e_2)\leq f(\eta_1,\eta_2)\leq f(\xi_1,\xi_2),  \qquad if \,\,\,\,  p\leq 2.
$$

\end{thm}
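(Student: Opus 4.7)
The plan is to exploit the permutation and sign-flip symmetries of $B_p^n$, under which the density of $X$ is invariant. Expanding
\begin{equation*}
\langle X,\eta_1\rangle^2\langle X,\eta_2\rangle^2 \;=\; \sum_{i,j,k,l}\eta_1(i)\eta_1(j)\eta_2(k)\eta_2(l)\, X_iX_jX_kX_l
\end{equation*}
and taking expectation, only those quadruples $(i,j,k,l)$ in which each index appears an even number of times contribute. Splitting into the case $i=j=k=l$ and the three pair-matchings, using $\E X_i^2=M_2:=\E X_1^2$ and $\E X_iX_j=0$ for $i\neq j$, and invoking $\langle\eta_1,\eta_2\rangle=0$, a direct bookkeeping with $A:=\E X_1^4$ and $B:=\E X_1^2X_2^2$ collapses everything to the compact formula
\begin{equation*}
f(\eta_1,\eta_2) \;=\; (B-M_2^2) \;+\; (A-3B)\,S(\eta_1,\eta_2), \qquad S(\eta_1,\eta_2):=\sum_{i=1}^n \eta_1(i)^2\eta_2(i)^2.
\end{equation*}
In particular $f(e_1,e_2)=B-M_2^2$ (already known to be non-positive via the SNCP with respect to the canonical basis proved in \cite{ABP}) and $f(\xi_1,\xi_2)=(B-M_2^2)+(A-3B)/2$.

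Next I would show $S(\eta_1,\eta_2)\in[0,1/2]$ for every orthonormal pair. The lower bound is immediate. For the upper bound, completing $\{\eta_1,\eta_2\}$ to an orthonormal basis of $\R^n$ forces $\eta_1(i)^2+\eta_2(i)^2\le 1$ for every coordinate $i$; then AM-GM gives
\begin{equation*}
\eta_1(i)^2\eta_2(i)^2 \le \tfrac14\bigl(\eta_1(i)^2+\eta_2(i)^2\bigr)^2 \le \tfrac14\bigl(\eta_1(i)^2+\eta_2(i)^2\bigr),
\end{equation*}
and summing in $i$ yields $S(\eta_1,\eta_2)\le 1/2$. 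Since $S=0$ at $(e_1,e_2)$ and $S=1/2$ at $(\xi_1,\xi_2)$, the theorem reduces to determining the sign of $A-3B$: the case $p\ge 2$ needs $A\le 3B$, the case $p\le 2$ needs the reverse.

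By the classical Dirichlet-type integral formula on $B_p^n$,
\begin{equation*}
\int_{B_p^n}\prod_{i=1}^n |x_i|^{a_i}\,dx \;=\; \frac{2^n}{p^n}\,\frac{\prod_{i=1}^n\Gamma\!\left(\tfrac{a_i+1}{p}\right)}{\Gamma\!\left(1+\tfrac{n+\sum_i a_i}{p}\right)},
\end{equation*}
one obtains
\begin{equation*}
\frac{A}{B} \;=\; \frac{\Gamma(5/p)\,\Gamma(1/p)}{\Gamma(3/p)^2} \;=:\; h(p),
\end{equation*}
and a direct check gives $h(2)=3$. The theorem then follows once $h$ is shown to be strictly decreasing in $p$.

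This Gamma-function monotonicity is the main technical obstacle, but admits a clean proof. Setting $u=1/p$ and using $\psi(x)=-\gamma+\sum_{k=0}^\infty\bigl(\tfrac{1}{k+1}-\tfrac{1}{k+x}\bigr)$, a short partial-fraction identity gives
\begin{equation*}
5\psi(5u)+\psi(u)-6\psi(3u) \;=\; \sum_{k=1}^\infty \frac{8\,k\,u}{(k+u)(k+3u)(k+5u)} \;>\;0 \qquad(u>0),
\end{equation*}
which is (up to the factor $-1/p^2$) precisely $\frac{d}{dp}\log h(p)$. Hence $h'(p)<0$ on $(0,\infty)$, so $A<3B$ for $p>2$ and $A>3B$ for $p<2$, yielding the two cases of the theorem.
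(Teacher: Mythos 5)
Your proof is correct, and it takes a genuinely different route from the paper's. The paper obtains the key affine decomposition $f(\eta_1,\eta_2)=f(e_1,e_2)+2\bigl(f(\xi_1,\xi_2)-f(e_1,e_2)\bigr)\sum_i\eta_1(i)^2\eta_2(i)^2$ by viewing $B_p^n$ as the hyperplane projection $P_{e_{n+1}^\perp}B_p^{n+1}$ and running everything through the cone-measure representation (Lemma \ref{lem:ProbabilisticRepresentation}) with the independent variables $g_i$; you instead compute the fourth moments of the uniform measure on $B_p^n$ directly, using unconditionality to kill the odd moments and the Dirichlet integral to evaluate $A/B=\Gamma(5/p)\Gamma(1/p)/\Gamma(3/p)^2$, arriving at the same decomposition with $A-3B$ playing the role of $2(f(\xi_1,\xi_2)-f(e_1,e_2))$. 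Your bound $0\le\sum_i\eta_1(i)^2\eta_2(i)^2\le 1/2$ via $\eta_1(i)^2+\eta_2(i)^2\le1$ (Bessel) plus AM--GM is cleaner and more complete than the paper's Lemma \ref{lem:Lagrange1}, whose Lagrange-multiplier argument is only sketched. For the sign of $A-3B$ both proofs reduce to the positivity of $5\psi(5u)+\psi(u)-6\psi(3u)$, i.e.\ monotonicity of the Gurland ratio $F(x)=\Gamma(5x)\Gamma(x)/\Gamma(3x)^2$: the paper uses the integral representation of $\psi$ and convexity of $e^{-y}$, while your series/partial-fraction identity (whose numerator indeed collapses to $8ku$) is equally valid and even gives monotonicity on all of $(0,\infty)$ rather than just $(0,1]$. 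What your approach buys is a shorter, self-contained proof of Theorem \ref{thm:Bpn}; what the paper's machinery buys is reusability, since the cone-measure representation is exactly what is needed in Section 4 for the diagonal projection $P_{\theta_0^\perp}B_p^n$, where a direct moment computation on the body would be far less tractable.
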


Clearly, the choice of $e_1,e_2$ is not relevant as $f(e_1,e_2)=f(e_i,e_j), \forall\, i\ne j$. The analogous observation applies also to $\xi_1,\xi_2$.

We will compute $f(e_1,e_2)$ and $f(\xi_1,\xi_2)$ in Lemma 3.2 and express them in terms of the $\Gamma$ function in order to obtain the following
 \begin{cor}
 Let $p\geq1$ and $X$ a random vector uniformly distributed on $B_p^n$. \medskip
 \begin{itemize}
 	\item If $p\geq 2$, $X$ satisfies the SNCP with respect to every orthonormal basis.\medskip
 	\item If $1\leq p<2$, there exists $n_0(p)\in\N$ such that for any $n\geq n_0$ there is an orthonormal basis $\{\eta_i\}_{i=1}^n$ such that $X$ does not satisfy the SNCP with respect to $\{\eta_i\}_{i=1}^n$.
 \end{itemize}
 \end{cor}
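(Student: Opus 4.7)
The plan is to combine Theorem \ref{thm:Bpn} with the explicit Gamma-function expressions for $f(e_1,e_2)$ and $f(\xi_1,\xi_2)$ provided by Lemma 3.2, together with the known fact from \cite{ABP} that a random vector uniformly distributed on $B_p^n$ satisfies the SNCP with respect to the canonical basis for every $p\ge 1$; equivalently, $f(e_1,e_2)\le 0$ for every $p\ge 1$ and every $n$.

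For $p\ge 2$ the argument is immediate: given any orthonormal basis $\{\eta_i\}_{i=1}^n$ and any pair $i\neq j$, the upper bound in Theorem \ref{thm:Bpn} yields $f(\eta_i,\eta_j)\le f(e_1,e_2)\le 0$, which is precisely the SNCP with respect to $\{\eta_i\}_{i=1}^n$.

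For $1\le p<2$, Theorem \ref{thm:Bpn} identifies $f(\xi_1,\xi_2)$ as the maximum of $f$ over orthonormal pairs, so it is enough to prove that $f(\xi_1,\xi_2)>0$ once $n$ exceeds some threshold $n_0(p)$; extending $\{\xi_1,\xi_2\}$ to an orthonormal basis by appending $\{e_3,\dots,e_n\}$ then supplies the required witness to the failure of SNCP.

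The main obstacle is the sign analysis of the Gamma expression for $f(\xi_1,\xi_2)$ furnished by Lemma 3.2. Keeping $p\in[1,2)$ fixed and applying Stirling's formula to the relevant $\Gamma$-ratios, one expands $f(\xi_1,\xi_2)$ asymptotically as $n\to\infty$ and isolates the sign of the leading term as a function of $p$. This sign has to be strictly positive on $[1,2)$ and must vanish at $p=2$, since the rotational invariance of $B_2^n$ forces $f(\xi_1,\xi_2)=f(e_1,e_2)\le 0$ in that case. Consequently $n_0(p)$ must diverge as $p\to 2^-$, and the delicate point is to exhibit the $p$-dependent coefficient whose sign controls the large-$n$ behaviour and witnesses the crossover at $p=2$; a helpful sanity check is the case $p=1$, which is already known from the introduction to produce a counterexample and should match the limiting expression.
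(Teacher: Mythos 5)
Your reduction is the same as the paper's: for $p\ge 2$ the upper bound of Theorem \ref{thm:Bpn} together with $f(e_1,e_2)\le 0$ gives the SNCP for every orthonormal basis (the paper actually reproves $f(e_1,e_2)<0$ directly from Lemma \ref{lem:ValueOfF}, noting that strict log-convexity of $\Gamma$ makes the factor $1-\Gamma(1+\frac{n}{p})\Gamma(1+\frac{n+4}{p})/\Gamma(1+\frac{n+2}{p})^2$ negative, rather than quoting \cite{ABP}, but either route is fine); and for $1\le p<2$ it suffices to show $f(\xi_1,\xi_2)>0$ for large $n$ and complete $\{\xi_1,\xi_2\}$ to the basis $\{\xi_1,\xi_2,e_3,\dots,e_n\}$. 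That part is correct.

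The gap is in the second bullet: you never actually establish that the leading coefficient is strictly positive on $[1,2)$ — you assert that it ``has to be'' positive and flag its identification as ``the delicate point,'' which is precisely the step that carries the content. The rotational-invariance observation at $p=2$ only tells you the coefficient vanishes there; it says nothing about its sign for $p<2$. The paper's resolution is concrete: by Lemma \ref{lem:ValueOfF} the sign of $f(\xi_1,\xi_2)$ is that of $F(\tfrac{1}{p})-1-2\,\Gamma(1+\tfrac{n}{p})\Gamma(1+\tfrac{n+4}{p})/\Gamma(1+\tfrac{n+2}{p})^2$ with $F(x)=\Gamma(5x)\Gamma(x)/\Gamma(3x)^2$; Stirling gives that the ratio tends to $1$ as $n\to\infty$, so the limit is $F(\tfrac{1}{p})-3$; and Lemma \ref{lem:GammasSecondTerm} (strict monotonicity of Gurland's ratio on $(0,1]$ with $F(\tfrac12)=3$, proved via the integral representation of the digamma function and convexity of $e^{-y}$) gives $F(\tfrac{1}{p})>3$ exactly when $\tfrac{1}{p}>\tfrac12$, i.e.\ $1\le p<2$. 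Without this monotonicity input (or an equivalent), your argument does not close; with it, it coincides with the paper's proof.
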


 Moreover, we will show that $f(e_1,e_2)<0$ for all $p\ge 1$, providing a new proof of the aforementioned result in \cite{ABP}.

In order to prove Theorem \ref{thm:Bpn} we will view $B_p^n$ as the projection of $B_p^{n+1}$ onto the coordinate hyperplane $e_{n+1}^\perp$ orthogonal to $e_{n+1}$ and we will make use of the techniques developed in \cite{BN} and \cite{AB3}. The details of this approach are explained in Section 2.

In Section 4 we apply the same strategy to a random vector uniformly distributed on $P_{\theta_0^\perp}B_{p}^n$, the orthogonal projection of $B_p^n$ onto $\theta_0^\perp$, where $\theta_0=\left(\frac{1}{\sqrt n},\dots,\frac{1,  }{\sqrt n}\right)$. However, the computations become more involved, due to the fact that some of random variables that appear are no longer independent.

Denoting $S_{\theta_0^\perp}= S^{n-1}\cap\theta_0^\perp$, we prove the following

\begin{thm}\label{thm:Diagonal}
Let $X$ be a random vector uniformly distributed on $P_{\theta_0^\perp}B_{p}^n$, $p\geq 1$ and write
 $\xi_1=\frac{e_1-e_2+e_3-e_4}{2}$, $\xi_2=\frac{e_1-e_2-e_3+e_4}{2}$, $\overline{\xi}_1=\frac{e_1-e_2}{\sqrt{2}}$, $\overline{\xi}_2=\frac{e_3-e_4}{\sqrt{2}}\in S_{\theta_0^\perp}$. Let $f: S_{\theta_0^\perp}\times S_{\theta_0^\perp}\to\R$ be the function
 $$
 f(\eta_1,\eta_2)=\E\langle X,\eta_1\rangle^2\langle X,\eta_2\rangle^2-\E\langle X,\eta_1\rangle^2\E\langle X,\eta_2\rangle^2.
 $$ For every fixed $p\geq 2$  there exists $n_0(p)\in\N$ such that if $n\geq n_0$ then for every $\eta_1,\eta_2\in S_{\theta_0^\perp}$ such that $\langle\eta_1,\eta_2\rangle=0$ we have that,
$$
f(\xi_1,\xi_2)\leq f(\eta_1,\eta_2)\leq f(\overline{\xi}_1,\overline{\xi}_2),
$$
and for every $1\leq p\leq 2$ there exists $n_1(p)\in\N$ such that if $n\geq n_1$ then
$$
f(\overline{\xi}_1,\overline{\xi}_2)\leq f(\eta_1,\eta_2)\leq f(\xi_1,\xi_2).
$$
\end{thm}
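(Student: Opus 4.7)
The proof should follow the same strategy as that of Theorem \ref{thm:Bpn}, adapted to the projected setting. One uses the probabilistic representation for uniform vectors on (projections of) $B_p^n$ developed in \cite{BN}, together with the techniques of \cite{AB3}, to compute the $4$-point moment tensor $M_{ijkl}=\E X_i X_j X_k X_l$. Since $P_{\theta_0^\perp} B_p^n$ inherits the permutation symmetry of $B_p^n$ and $\theta_0$, this tensor depends only on the partition type of the multi-index, giving five distinct values corresponding to the partitions $(4)$, $(3,1)$, $(2,2)$, $(2,1,1)$, and $(1,1,1,1)$.

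A crucial difference with the setting of Theorem \ref{thm:Bpn} is that the projection onto $\theta_0^\perp$ does not preserve the coordinate sign-change symmetries of $B_p^n$, so the moments of type $(3,1)$ and $(2,1,1)$ are generally nonzero. This is precisely the loss of independence alluded to in the paragraph preceding the statement. However, the almost sure identity $\sum_i X_i=0$ provides linear relations $\sum_\ell M_{ijk\ell}=0$ for every triple $(i,j,k)$, reducing the five moments to only two free parameters depending on $p$ and $n$.

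Expanding $f(\eta_1,\eta_2)$ in this basis of moments and applying the constraints $\sum_i\eta_j(i)=0$ (from $\eta_j\in\theta_0^\perp$), $\langle\eta_1,\eta_2\rangle=0$, and $|\eta_j|^2=1$, one reduces $f$ to a polynomial in a small number of symmetric invariants of $(\eta_1,\eta_2)$, the leading one being $\sum_i \eta_1(i)^2\eta_2(i)^2$. Optimizing this polynomial over orthonormal pairs in $\theta_0^\perp$ --- via Lagrange multipliers together with the symmetries of the quadratic form --- one identifies the critical configurations and verifies that the extremes are attained at $(\xi_1,\xi_2)$ and $(\overline{\xi}_1,\overline{\xi}_2)$. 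As in Theorem \ref{thm:Bpn}, these two pairs span the same two-dimensional subspace of $\theta_0^\perp$ and are related by a $45^\circ$ rotation inside it.

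The main obstacle is the sign analysis that distinguishes the maximizer from the minimizer. The sign of the dominant coefficient in $f$, as a function of $p$, must flip at $p=2$ so as to parallel Theorem \ref{thm:Bpn}; establishing this requires writing the moments explicitly in terms of the $\Gamma$ function, extending the Lemma 3.2 computation to the projected setting. The threshold $n_0(p)$ (resp.\ $n_1(p)$) arises because the $(3,1)$- and $(2,1,1)$-type contributions introduce $O(1/n)$ correction terms that must be dominated by the main-order term in order to preserve the sign uniformly in $(\eta_1,\eta_2)$. Controlling these corrections uniformly is the principal technical difficulty and the reason why the conclusion is only asymptotic in $n$.
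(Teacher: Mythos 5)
Your overall strategy coincides with the paper's: by permutation symmetry and the constraints $\sum_i\eta_j(i)=0$, $|\eta_j|^2=1$, $\langle\eta_1,\eta_2\rangle=0$, the function $f$ reduces to an \emph{affine} function of the single invariant $t=\sum_{i=1}^n\eta_1(i)^2\eta_2(i)^2$ (this is Proposition 4.1, obtained through the combinatorial identities of Lemma 4.2, which are precisely your linear relations coming from $\sum_i X_i=0$), and the Lagrange-multiplier step showing $t\in[0,\tfrac14]$ with extremizers $(\overline{\xi}_1,\overline{\xi}_2)$ and $(\xi_1,\xi_2)$ is Lemma 4.3. Up to this point your outline is sound and matches the paper; the $45^\circ$-rotation remark is also correct.

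The genuine gap is in the sign analysis of the slope $f(\xi_1,\xi_2)-f(\overline{\xi}_1,\overline{\xi}_2)$. You propose to establish it by ``writing the moments explicitly in terms of the $\Gamma$ function, extending the Lemma 3.2 computation to the projected setting.'' That route is closed: by Lemma 2.1 every moment of $X$ is of the form $\E\, q(g_1,\dots,g_4)\left|\sum_{i=1}^n\signum(g_i)|g_i|^{p-1}\right|$ (up to normalization), and the absolute value of the sum over \emph{all} $n$ coordinates destroys the independence that made the Section~3 moments products of Gamma functions; no closed form exists. Consequently your claim that the $(3,1)$- and $(2,1,1)$-type contributions are small corrections is exactly the statement requiring proof, and the proposal supplies no mechanism for it. The paper's missing ingredient is Lemma 4.4: writing $\sqrt{n}\,\psi_{\theta_0}=|Y_4+Z|$ with $Y_4=\sum_{i\le 4}\signum(g_i)|g_i|^{p-1}$ and $Z$ the independent symmetric tail, one gets $\E h|Y_4+Z|=\E h\,\E|Z|+E$ with $|E|\le(\E h^2)^{1/2}(\E Y_4^2)^{1/2}=O(1)$, while $\E|Z|\asymp\sqrt{n/p}$ by Lemma 4.5; hence for $n\ge n_0(p)$ the sign of the slope is that of $\E h=\E g_1^4-3(\E g_1^2)^2$, which flips at $p=2$ by Lemma 2.3. (Incidentally, the corrections are $O(n^{-1/2})$ relative to the main term, not $O(1/n)$.) Without this lemma or an equivalent device, your argument can neither determine the sign of the slope nor produce the threshold $n_0(p)$, so the dichotomy between the two ranges of $p$ remains unproved.
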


Studying the sign of $f(\overline{\xi}_1,\overline{\xi}_2)$ and $f({\xi}_1,{\xi}_2)$, (see Lemmas 4.8 and 4.10) we obtain the following corollary:

\begin{cor}\label{CorollaryDiagonal}
Let $X$ be a random vector uniformly distributed on $P_{\theta_0^\perp}B_{p}^n$, $p\geq 1$. There exists $n_2(p)\in\N$ such that for all $n\ge n_2$, $X$ satisfies the SNCP with respect to every orthonormal basis in $\theta_0^\perp$.
\end{cor}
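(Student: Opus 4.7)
The proof is essentially a combination of Theorem \ref{thm:Diagonal} with sign information for the two extremal quantities $f(\overline{\xi}_1,\overline{\xi}_2)$ and $f(\xi_1,\xi_2)$ provided by the referenced Lemmas 4.8 and 4.10.

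The plan runs as follows. Fix an arbitrary orthonormal basis $\{\eta_i\}_{i=1}^{n-1}$ of $\theta_0^\perp$ and pick indices $i\ne j$. Since $\eta_i,\eta_j\in S_{\theta_0^\perp}$ are orthogonal unit vectors, Theorem \ref{thm:Diagonal} applies directly to this pair. For $p\ge 2$ and $n\ge n_0(p)$ it gives $f(\eta_i,\eta_j)\le f(\overline{\xi}_1,\overline{\xi}_2)$, whereas for $1\le p\le 2$ and $n\ge n_1(p)$ it gives $f(\eta_i,\eta_j)\le f(\xi_1,\xi_2)$. Therefore verifying the SNCP inequality $f(\eta_i,\eta_j)\le 0$ for every such pair reduces to the two sign inequalities
\[
f(\overline{\xi}_1,\overline{\xi}_2)\le 0 \quad (p\ge 2), \qquad f(\xi_1,\xi_2)\le 0 \quad (1\le p\le 2),
\]
both required only for $n$ sufficiently large. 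These are exactly the content of Lemmas 4.8 and 4.10. Defining $n_2(p)$ to be the maximum of the threshold coming from Theorem \ref{thm:Diagonal} (either $n_0(p)$ or $n_1(p)$ according to the regime) and the threshold coming from the corresponding sign lemma then yields the corollary.

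The bulk of the work therefore sits inside Lemmas 4.8 and 4.10, not in the deduction of the corollary. The main obstacle is that, because $X$ is uniform on a hyperplane projection of $B_p^n$, its coordinates are no longer independent and the clean product formulas available on $B_p^n$ break down. One would expect to compute the relevant second and fourth moments of $\langle X,v\rangle$ at $v\in\{\xi_1,\xi_2,\overline{\xi}_1,\overline{\xi}_2\}$ through the probabilistic representation of $X$ together with a conditioning argument, as suggested in the paragraph preceding the statement of Theorem \ref{thm:Diagonal}. The desired negativity is then visible only after cancellations in the leading order in $1/n$, which is precisely why the statement requires a threshold $n_2(p)$ rather than holding for all $n$.
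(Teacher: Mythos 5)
Your proposal is correct and is essentially identical to the paper's own proof: the authors also deduce the corollary by combining the extremality statement of Theorem \ref{thm:Diagonal} with the negativity of $f(\overline{\xi}_1,\overline{\xi}_2)$ (Lemma 4.8, valid for all $p\ge1$) in the regime $p\ge2$ and of $f(\xi_1,\xi_2)$ (Lemma 4.10) in the regime $1\le p\le2$, taking $n_2(p)$ to be the maximum of the relevant thresholds. Your closing remarks about where the real work lies (in the sign lemmas, via the probabilistic representation and a conditioning argument) accurately reflect the structure of the paper as well.
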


 As a consequence of \cite[Proposition 1.9] {AB2}

\begin{cor} Let $X$ be a random vector uniformly distributed on $T(P_{\theta_0^\perp}B_{p}^n)$, $p\geq 1$, $T\colon\R^n\to\R^n$ linear isomorphism. There exists $C(p)>0$ (depending only on $p$) such that $X$ satisfies the General Variance Conjecture with $C=C(p)$.
\end{cor}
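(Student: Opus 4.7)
The plan is to combine Corollary~\ref{CorollaryDiagonal} with Proposition~1.9 of \cite{AB2}, and then to deal with the finitely many small dimensions by hand. For $n\ge n_2(p)$, Corollary~\ref{CorollaryDiagonal} tells us that a random vector $X$ uniformly distributed on $P_{\theta_0^\perp}B_p^n$ satisfies the SNCP with respect to every orthonormal basis of $\theta_0^\perp$. Proposition~1.9 of \cite{AB2} then provides an absolute constant $C_0$ such that $T(P_{\theta_0^\perp}B_p^n)$ verifies the General Variance Conjecture with $C=C_0$, for every linear isomorphism $T$. The crucial feature of that proposition is that the constant it produces is independent of $T$: although the SNCP is not preserved by $T$, expanding $|TX|^2=\sum_i \langle TX,v_i\rangle^2$ along the orthonormal eigenbasis $\{v_i\}$ of $TT^*$ turns the off-diagonal covariances into SNCP-type quantities for $X$ evaluated on the orthogonal pairs $\{T^*v_i/\|T^*v_i\|\}$, while the diagonal variances are controlled by Borell's lemma applied to the one-dimensional marginals of $TX$.

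For the finite range $n<n_2(p)$, Corollary~\ref{CorollaryDiagonal} is silent, so I would resort to a cheap dimension-dependent bound. The body $T(P_{\theta_0^\perp}B_p^n)$ lives in a space of dimension at most $n_2(p)-1$, and any zero-mean log-concave random vector $Y$ in $\R^m$ satisfies $\textrm{Var}\,|Y|^2\le c\,(\E|Y|^2)^2$ by Borell's lemma, together with $\lambda_Y^2\ge \E|Y|^2/m$ from the trace identity. These two inequalities combine into $\textrm{Var}\,|Y|^2\le cm\,\lambda_Y^2\,\E|Y|^2$, and since $m\le n_2(p)-1$ the GVC holds with a constant depending only on $p$.

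Setting $C(p):=\max\{C_0,\,c\,n_2(p)\}$ finishes the proof. The main obstacle, such as it is, is conceptual rather than computational: one must be comfortable with the precise content of Proposition~1.9 of \cite{AB2}, in particular with the fact that its constant is genuinely uniform in the isomorphism $T$ (and in $n$, once $n\ge n_2(p)$). Once that is accepted, the corollary follows at once from Corollary~\ref{CorollaryDiagonal} together with the crude bound used for small $n$.
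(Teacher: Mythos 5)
Your proposal is correct and follows essentially the same route as the paper, which deduces the statement from Corollary~\ref{CorollaryDiagonal} together with Proposition~1.9 of \cite{AB2}. The only addition is your explicit treatment of the finitely many dimensions $n<n_2(p)$ via Borell's lemma and the trace bound $\E|Y|^2\le m\lambda_Y^2$, which the paper leaves implicit and which is precisely where the dependence of $C$ on $p$ enters; that step is sound.
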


\section{Preliminary results}

In this section we will introduce the preliminary results that we need in order to prove Theorems \ref{thm:Bpn} and \ref{thm:Diagonal}. We  briefly review the tools developed in \cite{BN} and \cite{AB3}. Let
$\sigma_p^n$ be the surface measure (Hausdorff measure) on $\partial B_p^n$, the boundary of $B_p^n, p\ge 1$, and denote by $\mu_p^n$
the cone probability measure on $\partial B_p^n$, defined by
$
\mu^n_p(A)=\frac{1}{\textrm{Vol}(B^n_p)}  {\textrm{Vol}(\{ta\in\R^n;a\in A, 0\leq t\leq 1   \}) },\,\,
A\subseteq \partial B^n_p,
$ where Vol denotes the Lebesgue measure.

The following relation between the surface measure and the cone measure on $\partial B_p^n$ was stated in \cite{NR} (see also \cite{AB3}): For almost every point $x\in \partial B^n_p$
$$\frac{d\sigma^n_p(x)}{d\mu_p^n(x)}=n\textrm{Vol}(B^n_p)\left\vert
\nabla(\Vert\cdot\Vert_p)(x)\right\vert.
$$

The cone measure on $\partial B_p^n$ was proved in \cite{SZ} to have the following probabilistic description: Let $g_1,\dots,g_n$ be independent copies of a random variable $g$ with density with respect to the Lebesgue measure given by
$\displaystyle\frac{e^{-|t|^p}}{2\Gamma(1+1/p)}, t\in\R$, $p\ge 1$ and denote
$ S:=\left(\sum_{i=1}^n|g_i|^p\right)^\frac{1}{p}.$ Then
\begin{itemize}
\item The random vector
$\frac{G}{S}:=\left(\dfrac{g_1}{S},\dots,\dfrac{g_n}{S}\right)$  and the random
variable $S$ are independent.
\item $\frac{G}{S}$ is distributed on $\partial B_p^n $ according to the cone
measure $\mu_p^n$.
\end{itemize}

Now, in order to compute the expectation of a suitable function $f(X)$ for $X$ a random vector uniformly distributed on the orthogonal projection of $B_p^n$ onto some hyperplane orthogonal to $\theta\in S^{n-1}$, $P_{\theta^\perp}B_p^n$, we first use Cauchy's formula and pass to an integration on $ \partial B^n_p$ with respect to the surface measure, then use the relation between the surface measure and the cone measure and finally the latter probabilistic representation of the cone measure (see \cite{AB3} for the details). The final result is the starting point for the proof of our main results:

\begin{lemma}\label{lem:ProbabilisticRepresentation}\cite{AB3}
Let $\theta\in S^{n-1}$. If $X$ is a random vector uniformly distributed on $P_{\theta^\perp}B_p^n$, $g_1,\dots,g_n$ are independent copies of $g$ as above and $S=\left(\sum_{i=1}^n|g_i|^p\right)^\frac{1}{p}$, then
for every integrable function $f:P_{\theta^\perp}B_p^n\to\R$
$$
\E f(X)=\frac{\E
f\left(P_{\theta^\perp}\left(\frac{g_1}{S},\dots,\frac{g_n}{S}\right)\right)\left|\sum_{i=1}
^n
\frac{|g_i|^{p-1}}{S^{p-1}}\signum (g_i) \theta_i\right|}{\E
\left|\sum_{i=1}^n
\frac{|g_i|^{p-1}}{S^{p-1}}\signum (g_i) \theta_i\right|}.
$$
where $\theta=(\theta_1,\dots,\theta_n)$ and $\signum (g_i)$ denotes the sign of $g_i$.
\end{lemma}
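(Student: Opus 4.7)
The plan is to follow the three-step chain sketched in the surrounding text, computing the distribution of $X=P_{\theta^\perp}U$, where $U$ is uniform on $B_p^n$, by lifting the integral from $P_{\theta^\perp}B_p^n$ up to $\partial B_p^n$ and then rewriting it probabilistically.

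First I would invoke Cauchy's formula for orthogonal projections of a convex body. For any integrable $f\colon P_{\theta^\perp}B_p^n\to\R$,
\[
\int_{P_{\theta^\perp}B_p^n}f(y)\,dy=\frac{1}{2}\int_{\partial B_p^n}f(P_{\theta^\perp}x)\,|\langle n(x),\theta\rangle|\,d\sigma_p^n(x),
\]
where $n(x)$ is the outer unit normal at $x\in\partial B_p^n$. Since $B_p^n=\{\|\cdot\|_p\leq 1\}$, one has $n(x)=\nabla(\|\cdot\|_p)(x)/|\nabla(\|\cdot\|_p)(x)|$, and a direct differentiation of $\|\cdot\|_p$ gives $(\nabla\|\cdot\|_p)(x)_i=|x_i|^{p-1}\signum(x_i)$ for $x\in\partial B_p^n$ (the set where $p\geq 1$ fails to be smooth has $\sigma_p^n$-measure zero and can be ignored). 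Therefore $|\langle n(x),\theta\rangle|$ equals $|\sum_i|x_i|^{p-1}\signum(x_i)\theta_i|\,/\,|\nabla(\|\cdot\|_p)(x)|$.

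Next I would use the relation $d\sigma_p^n(x)=n\Vol(B_p^n)|\nabla(\|\cdot\|_p)(x)|\,d\mu_p^n(x)$ recalled in the section. The gradient norm in the denominator produced by Cauchy's formula cancels against the one appearing here, leaving
\[
\int_{P_{\theta^\perp}B_p^n}f(y)\,dy=\frac{n\Vol(B_p^n)}{2}\int_{\partial B_p^n}f(P_{\theta^\perp}x)\Bigl|\sum_{i=1}^n|x_i|^{p-1}\signum(x_i)\theta_i\Bigr|\,d\mu_p^n(x).
\]
Now I would apply the Schechtman--Zinn probabilistic representation of $\mu_p^n$: integrating an arbitrary measurable $h$ on $\partial B_p^n$ against $\mu_p^n$ equals $\E\,h(G/S)$, where $G=(g_1,\dots,g_n)$ and $S=(\sum_i|g_i|^p)^{1/p}$. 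Substituting $x=G/S$ and noting $|x_i|^{p-1}=|g_i|^{p-1}/S^{p-1}$ transforms the right-hand side into
\[
\frac{n\Vol(B_p^n)}{2}\,\E\!\left[f\!\left(P_{\theta^\perp}\!\Bigl(\tfrac{g_1}{S},\dots,\tfrac{g_n}{S}\Bigr)\right)\Bigl|\sum_{i=1}^n\tfrac{|g_i|^{p-1}}{S^{p-1}}\signum(g_i)\theta_i\Bigr|\right].
\]

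Finally, to obtain the stated quotient, I would divide by $\Vol(P_{\theta^\perp}B_p^n)$ to pass from the integral to the expectation $\E f(X)$, and then fix the multiplicative constant by applying the identity to $f\equiv 1$: this forces
\[
\frac{n\Vol(B_p^n)}{2\Vol(P_{\theta^\perp}B_p^n)}=\Bigl(\E\Bigl|\sum_{i=1}^n\tfrac{|g_i|^{p-1}}{S^{p-1}}\signum(g_i)\theta_i\Bigr|\Bigr)^{-1},
\]
which when inserted yields exactly the formula in the lemma. No step is genuinely difficult; the only points that require care are (i) checking that the gradient norms cancel cleanly so that the identity is valid for every integrable $f$, not just smooth ones, and (ii) handling the non-smooth points of $\partial B_p^n$ when $p=1$, which is harmless because they form a $\sigma_p^n$-null set.
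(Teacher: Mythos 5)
Your proposal is correct and follows exactly the route the paper itself outlines for this lemma (which it cites from [AB3] without reproducing the details): Cauchy's formula to lift the integral to $\partial B_p^n$, the Naor--Romik relation $d\sigma_p^n=n\Vol(B_p^n)|\nabla(\|\cdot\|_p)|\,d\mu_p^n$ to cancel the gradient norm, and the Schechtman--Zinn representation of $\mu_p^n$, with the constant fixed by taking $f\equiv 1$. Nothing to add.
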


The following lemma computes the expectation of the random variables involved in terms of the Gamma function:

\begin{lemma}\label{LemmagS}\cite{AB3}\cite{BN}
Let $\alpha\geq 0$, let $g_1,\dots,g_n$ be independent copies of
$g$ as above and $S=\left(\sum_{i=1}^n|g_i|^p\right)^\frac{1}{p}$. Then
$$
\E|g|^\alpha=
\frac{1}{\alpha+1}
\frac{\Gamma\left(1+\frac{\alpha+1}{p}\right)}
{\Gamma\left(1+\frac{1}{p
}\right)}
\qquad\qquad and \qquad\qquad
\E S^\alpha=\frac{n}{n+\alpha}\frac{\Gamma\left(1+
 \frac{n+\alpha}{p}\right)}{\Gamma\left(1+
 \frac{n}{p}\right)}.
$$
\end{lemma}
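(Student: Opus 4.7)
The plan is to prove both identities by direct integration against the given densities, with a single change of variables in each case, followed by an application of the functional equation $\Gamma(1+x)=x\Gamma(x)$ to match the prescribed form.

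For $\E|g|^\alpha$, the symmetry of $g$ together with its density $e^{-|t|^p}/(2\Gamma(1+1/p))$ reduces the expectation to
$$
\E|g|^\alpha=\frac{1}{\Gamma(1+1/p)}\int_0^\infty t^\alpha e^{-t^p}\,dt.
$$
The substitution $u=t^p$ converts the integral into $\tfrac{1}{p}\Gamma\left(\tfrac{\alpha+1}{p}\right)$. A single use of $\Gamma(1+x)=x\Gamma(x)$ with $x=(\alpha+1)/p$ absorbs the $1/p$ prefactor and produces the factor $1/(\alpha+1)$ together with the shifted Gamma factor stated in the lemma.

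For $\E S^\alpha$ the first step is to identify the law of $|g_i|^p$. The change of variables $y=t^p$ in the density of $|g|$ shows that $|g_i|^p$ has density $y^{1/p-1}e^{-y}/\Gamma(1/p)$, so $|g_i|^p$ is Gamma$(1/p,1)$. By independence, $S^p=\sum_{i=1}^n|g_i|^p$ is Gamma$(n/p,1)$, and consequently
$$
\E S^\alpha=\E(S^p)^{\alpha/p}=\frac{1}{\Gamma(n/p)}\int_0^\infty y^{(n+\alpha)/p-1}e^{-y}\,dy=\frac{\Gamma((n+\alpha)/p)}{\Gamma(n/p)}.
$$
Applying $\Gamma(1+x)=x\Gamma(x)$ to numerator and denominator separately then produces the factor $n/(n+\alpha)$ and rewrites the identity in the target form.

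Neither step presents a genuine obstacle; once the probabilistic description of the cone measure has been set up, this lemma is essentially bookkeeping. The only care required is in the final algebraic manipulation, where one must shift arguments by $1$ in both Gamma factors to match the normalization $\Gamma(1+\,\cdot\,)$ used in the statement, and to double-check that $\alpha\ge 0$ is enough to keep all integrals convergent.
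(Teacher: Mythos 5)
Your proof is correct: the paper itself gives no proof of this lemma (it is quoted from [AB3] and [BN]), and your direct computation --- reducing $\E|g|^\alpha$ to $\frac{1}{p}\Gamma\bigl(\frac{\alpha+1}{p}\bigr)/\Gamma(1+1/p)$ via $u=t^p$, identifying $|g_i|^p$ as Gamma$(1/p,1)$ so that $S^p$ is Gamma$(n/p,1)$, and then shifting arguments with $\Gamma(1+x)=x\Gamma(x)$ --- is exactly the standard argument behind the cited formulas. All steps check out, including convergence for $\alpha\ge 0$.
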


Our last lemma concerns the so called Gurland's ratio for the Gamma function (see more details in \cite{M}) and it will be crucial in our estimates.

\begin{lemma}\label{lem:GammasSecondTerm}
The function
$$
F(x):=\Gamma(5x)\Gamma(x)/\Gamma(3x)^2
$$
is strictly increasing in $(0,1]$ and verifies $F(\frac{1}{2})=3$.
\end{lemma}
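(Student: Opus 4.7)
The plan is to handle the two claims separately. The value at $x = 1/2$ is a one-line computation: using $\Gamma(1/2) = \sqrt{\pi}$, $\Gamma(3/2) = \frac{1}{2}\sqrt{\pi}$ and $\Gamma(5/2) = \frac{3}{4}\sqrt{\pi}$, one gets $F(1/2) = (\frac{3\pi}{4})/(\frac{\pi}{4}) = 3$.

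The substantive claim is strict monotonicity. I would work with $\log F$ and the Weierstrass product representation
\[
\log\Gamma(x) = -\gamma x - \log x + \sum_{k=1}^\infty\Bigl(\tfrac{x}{k} - \log\bigl(1+\tfrac{x}{k}\bigr)\Bigr),
\]
Plugging in $x$, $3x$, $5x$, the Euler-Mascheroni terms cancel (since $5+1-2\cdot 3 = 0$) and the linear terms $x/k$ cancel too. The $-\log x$ pieces contribute $-\log 5x - \log x + 2\log 3x = \log(9/5)$. What remains is
\[
\log F(x) = \log\tfrac{9}{5} + \sum_{k=1}^\infty \log\frac{(1+3x/k)^2}{(1+5x/k)(1+x/k)}.
\]
The key algebraic simplification is that the linear terms in the numerator and denominator both equal $6x/k$ (because $5+1 = 2\cdot 3$), so writing $u = x/k$ each summand becomes
\[
\log\frac{1+6u+9u^2}{1+6u+5u^2} = \log\!\left(1 + \frac{4u^2}{1+6u+5u^2}\right).
\]

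Monotonicity now reduces to checking that $h(u) := 4u^2/(1+6u+5u^2)$ is strictly increasing for $u > 0$, which is a routine quotient-rule computation: the numerator of $h'(u)$ simplifies to $8u(1+3u) > 0$. Since each term in the series is strictly increasing in $x$, so is $\log F$ on $(0,\infty)$, proving the claim on $(0,1]$ (indeed on all of $(0,\infty)$).

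I do not foresee serious obstacles: the whole argument rests on the one algebraic coincidence $5+1 = 2\cdot 3$, which kills the first-order cross term and leaves a manifestly positive remainder. An alternative route would be to show $(\log F)'(x) = 5\psi(5x) + \psi(x) - 6\psi(3x) > 0$ via the series $\psi(x) = -\gamma - 1/x + \sum_{n\ge 1} x/[n(n+x)]$, which leads to essentially the same rational-function estimate; the Weierstrass-product formulation above seems cleaner and avoids differentiating under the series sign.
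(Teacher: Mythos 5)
Your proof is correct, and it takes a genuinely different route from the paper's. The paper differentiates $\log F$ to get $5\psi(5x)+\psi(x)-6\psi(3x)$ with $\psi$ the digamma function, expresses this via the integral representation $\psi(x)=\int_0^\infty\bigl(\frac{e^{-t}}{t}-\frac{e^{-xt}}{1-e^{-t}}\bigr)dt$, multiplies by $x$, integrates by parts in $t$, and concludes positivity from the convexity of $e^{-y}$ evaluated at the equally spaced points $xt,3xt,5xt$. You instead expand $\log F$ directly through the Weierstrass product, where the same arithmetic identity $5+1=2\cdot 3$ that drives the paper's convexity step now kills the Euler--Mascheroni and linear terms and matches the first-order coefficients of $(1+3u)^2$ and $(1+5u)(1+u)$, leaving the manifestly positive, increasing remainder $4u^2/(1+6u+5u^2)$ in each summand; I checked the algebra, including $h'(u)$ having numerator $8u(1+3u)$, and the termwise monotonicity argument is legitimate since the series converges (each term is $O(x^2/k^2)$) and no differentiation under the sum is needed. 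Your approach is more elementary --- it needs no integral representation, no integration by parts, and no convexity lemma --- while the paper's is the standard digamma technique for Gurland-ratio estimates and generalizes more readily when the coefficient cancellation is less clean. Both arguments in fact give strict monotonicity on all of $(0,\infty)$, and your evaluation $F(1/2)=3$ is the same direct computation the paper leaves to the reader.
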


\begin{proof}
The function $F$ is increasing if and only if its logarithm is increasing. Therefore, let us see that the function
$$
h(x)=\log\Gamma(5x)+\log\Gamma(x)-2\log\Gamma(3x)
$$
is increasing. Denoting by $\psi$ the logarithmic derivative of the Gamma function, which verifies (see, for instance \cite{ABR})
$$
\psi(x)=\int_0^\infty\left(\frac{e^{-t}}{t}-\frac{e^{-xt}}{1-e^{-t}}\right)dt,
$$
we have that
\begin{eqnarray*}
h^\prime(x)&=&5\psi(5x)+\psi(x)-6\psi(3x)=5\int_0^\infty\left(\frac{e^{-t}}{t}-\frac{e^{-5xt}}{1-e^{-t}}\right)dt+\cr
&+&\int_0^\infty\left(\frac{e^{-t}}{t}-\frac{e^{-xt}}{1-e^{-t}}\right)dt-6\int_0^\infty\left(\frac{e^{-t}}{t}-\frac{e^{-3xt}}{1-e^{-t}}\right)dt\cr
&=&\int_0^\infty\frac{1}{1-e^{-t}}(-5e^{-5xt}-e^{-xt}+6e^{-3xt})dt.
\end{eqnarray*}
Then,
\begin{eqnarray*}
xh^\prime(x)&=&\int_0^\infty\frac{1}{1-e^{-t}}(-5xe^{-5xt}-xe^{-xt}+6xe^{-3xt})dt\cr
&=&\int_0^\infty\frac{1}{1-e^{-t}}\frac{d}{dt}(e^{-5xt}+e^{-xt}-2e^{-3xt})dt\cr
&=&\int_0^\infty\frac{e^{-t}}{(1-e^{-t})^2}\left((e^{-5xt}-e^{-3xt})-(e^{-3xt}-e^{-xt})\right)dt.\cr
\end{eqnarray*}
Since the function $e^{-y}$ is convex, we have $\displaystyle
\frac{e^{-5xt}-e^{-3xt}}{2xt}\geq\frac{e^{-3xt}-e^{-xt}}{2xt}
$, $\,\forall\, x,t>0$
and so the last integral is positive. Thus, for every $x>0$, $h^\prime(x)>0$ and we obtain the result. It is clear that $F(\frac{1}{2})=3$.
\end{proof}

\section{The SCNP on $B_p^n$}

In this section we will prove Theorem \ref{thm:Bpn}.

\begin{proposition}\label{prop:DecompositionOfF}
Let $X$ be a random vector uniformly distributed on $B_p^n$, $p\geq 1$, and write $\xi_1=\frac{e_1+e_2}{\sqrt{2}}$, $\xi_2=\frac{e_1-e_2}{\sqrt{2}}$. Let $f: S^{n-1}\times S^{n-1}$ be the function
$$
f(\eta_1,\eta_2)=\E\langle X,\eta_1\rangle^2\langle X,\eta_2\rangle^2-\E\langle X,\eta_1\rangle^2\E\langle X,\eta_2\rangle^2.
$$
For every $\eta_1,\eta_2\in S^{n-1}$ such that $\langle\eta_1,\eta_2\rangle=0$, we have
$$
f(\eta_1,\eta_2)= f(e_1,e_2)+2\Big(f(\xi_1,\xi_2)-f(e_1,e_2)\Big)\sum_{i=1}^n \eta_1(i)^2\eta_2(i)^2.
$$
where $\eta_j=(\eta_j(1),\dots \eta_j(n)), j=1,2.$
\end{proposition}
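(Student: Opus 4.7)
My plan is to exploit the invariance of the uniform distribution on $B_p^n$ under coordinate permutations and coordinate sign changes. These symmetries force the second-moment matrix of $X$ to be a scalar multiple of the identity, and they force $\E X_iX_jX_kX_l$ to vanish unless every index appears an even number of times in $\{i,j,k,l\}$. Therefore all relevant moments are controlled by just three numbers: $a:=\E X_1^2$, $b:=\E X_1^4$ and $c:=\E X_1^2X_2^2$. In particular $\E\langle X,\eta_1\rangle^2\E\langle X,\eta_2\rangle^2=a^2$ for any unit vectors $\eta_1,\eta_2$, so only the four-fold term is nontrivial.

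Next I would expand
\begin{align*}
\E\langle X,\eta_1\rangle^2\langle X,\eta_2\rangle^2=\sum_{i,j,k,l}\eta_1(i)\eta_1(j)\eta_2(k)\eta_2(l)\,\E X_iX_jX_kX_l
\end{align*}
by separating the diagonal contribution $i=j=k=l$ (worth $bT$, where $T:=\sum_i\eta_1(i)^2\eta_2(i)^2$) from the three ``two-pair'' configurations $\{i=j,\,k=l\}$, $\{i=k,\,j=l\}$ and $\{i=l,\,j=k\}$ with the two shared values distinct. Using $|\eta_1|=|\eta_2|=1$ and the hypothesis $\langle\eta_1,\eta_2\rangle=0$, the three two-pair sums combine to
\begin{align*}
c(1-T)+2c\bigl(\langle\eta_1,\eta_2\rangle^2-T\bigr)=c-3cT.
\end{align*}
Adding the diagonal gives $\E\langle X,\eta_1\rangle^2\langle X,\eta_2\rangle^2=c+(b-3c)T$, hence
\begin{align*}
f(\eta_1,\eta_2)=(c-a^2)+(b-3c)\sum_{i=1}^n\eta_1(i)^2\eta_2(i)^2.
\end{align*}

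To finish, I would evaluate this affine expression at the two distinguished orthonormal pairs: for $(e_1,e_2)$ the quantity $T$ is zero, which identifies $f(e_1,e_2)=c-a^2$; for $(\xi_1,\xi_2)$ a direct check gives $T=\tfrac12$, so $f(\xi_1,\xi_2)-f(e_1,e_2)=(b-3c)/2$. Substituting $b-3c=2\bigl(f(\xi_1,\xi_2)-f(e_1,e_2)\bigr)$ back into the formula for $f(\eta_1,\eta_2)$ produces the claimed identity. The only mildly delicate step is the bookkeeping of the three two-pair cases in the fourth-moment expansion; once the orthogonality hypothesis eliminates the $\langle\eta_1,\eta_2\rangle^2$ term, the rest is essentially linear algebra plus the full symmetry of $B_p^n$.
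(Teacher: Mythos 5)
Your proof is correct, but it takes a genuinely different and more elementary route than the paper. The paper proves the proposition by realizing $B_p^n$ as $P_{e_{n+1}^\perp}B_p^{n+1}$ and invoking the cone-measure representation (Lemma \ref{lem:ProbabilisticRepresentation}), so that the fourth-moment expansion is carried out for the \emph{independent} variables $g_1,\dots,g_{n+1}$ and the coefficients come out as $(\E g_1^2)^2$ and $\E g_1^4-3(\E g_1^2)^2$ multiplied by ratios of moments of $S$. You instead work directly with the coordinates of $X$, observing that invariance of $B_p^n$ under permutations and sign changes of coordinates kills all odd mixed moments and reduces everything to $a=\E X_1^2$, $b=\E X_1^4$, $c=\E X_1^2X_2^2$; the same pairing bookkeeping (which I have checked: the diagonal gives $bT$, the pairing $i=j,\ k=l$ gives $c(1-T)$, and each of the two cross pairings gives $c\bigl(\langle\eta_1,\eta_2\rangle^2-T\bigr)=-cT$) yields $f(\eta_1,\eta_2)=(c-a^2)+(b-3c)T$ with $T=\sum_i\eta_1(i)^2\eta_2(i)^2$, and evaluating at $T=0$ and $T=\tfrac12$ identifies the two coefficients with $f(e_1,e_2)$ and $2\bigl(f(\xi_1,\xi_2)-f(e_1,e_2)\bigr)$. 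Your argument is more general, since it applies verbatim to any random vector whose law is invariant under the signed permutation group and uses no independence, and it isolates the purely combinatorial content of the statement. What the paper's route buys in exchange is that the same computation simultaneously produces the explicit expressions, via Lemma \ref{LemmagS}, for $f(e_1,e_2)$ and for $f(\xi_1,\xi_2)-f(e_1,e_2)=\frac{\E S^{p-1}}{2\E S^{p+3}}\bigl(\E g_1^4-3(\E g_1^2)^2\bigr)$ that are reused in Lemma \ref{lem:ValueOfF} and in the sign analysis proving Theorem \ref{thm:Bpn}; with your approach those evaluations would still have to be carried out separately.
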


\begin{proof}
Let $B_p^n=P_{e_{n+1}^\perp}B_p^{n+1}$ and let $X$ be a random vector uniformly distributed on $B_p^n$. We first apply Lemma \ref{lem:ProbabilisticRepresentation} to the function $\langle X,\eta_1\rangle^2$. Here, $S=\left(\sum_{i=1}^{n+1}|g_i|^p\right)^\frac{1}{p}$. By independence and symmetry of the $g_i's$, it is straightforward to check that $\E\left\langle G,\eta\right\rangle^2=\E g_1^2$
(it does not depend on $\eta\in S^{n-1}$) and since $\frac{G}{S}$ and $S$ are independent,
$$
\E \langle X,\eta_1\rangle^2=\frac{\E\left\langle\frac{G}{S},\eta_1\right\rangle^2\frac{|g_{n+1}|^{p-1}}{S^{p-1}}}{\E\frac{|g_{n+1}|^{p-1}}{S^{p-1}}}=\frac{\E S^{p-1}\E\left\langle G,\eta_1\right\rangle^2|g_{n+1}|^{p-1}}{\E S^{p+1}\E|g_{n+1}|^{p-1}}
\bigskip
=\frac{\E S^{p-1}\E g_1^2}{\E S^{p+1}}.
$$
The estimates in Lemma 2.2 yield to
$$
\E \langle X,\eta_1\rangle^2\,\E \langle X,\eta_2\rangle^2=\frac{\Gamma\left(\frac{3}{p}\right)^2\Gamma\left(1+\frac{n}{p}\right)^2}{\Gamma\left(\frac{1}{p}\right)^2\Gamma\left(1+\frac{n+2}{p}\right)^2}.
$$
In the same way, we apply Lemma \ref{lem:ProbabilisticRepresentation} to  $\langle X,\eta_1\rangle^2\,\langle X,\eta_2\rangle^2$

\begin{eqnarray*}
\E \langle X,\eta_1\rangle^2\langle X,\eta_2\rangle^2\!\!\!&=&\!\!\!\frac{\E\left\langle\frac{G}{S},\eta_1\right\rangle^2\!\!\left\langle\frac{G}{S},\eta_2\right\rangle^2\frac{|g_{n+1}|^{p-1}}{S^{p-1}}}{\E\frac{|g_{n+1}|^{p-1}}{S^{p-1}}}\cr
\!\!\!&=&\!\!\!\frac{\E S^{p-1}\E\left\langle G,\eta_1\right\rangle^2\!\!\left\langle G,\eta_2\right\rangle^2|g_{n+1}|^{p-1}}{\E S^{p+3}\E|g_{n+1}|^{p-1}}
=\frac{\E S^{p-1}}{\E S^{p+3}}\E\left\langle G,\eta_1\right\rangle^2\!\!\left\langle G,\eta_2\right\rangle^2
\end{eqnarray*}
We compute the last product taking into account  $\eta_1,\eta_2\in S^{n-1}$ and $\langle\eta_1,\eta_2\rangle=0$,

\begin{align*}
&\E\left\langle G,\eta_1\right\rangle^2\left\langle G,\eta_2\right\rangle^2\cr
&\!=\!\E g_1^4\sum_{i=1}^n \eta_1(i)^2\eta_2(i)^2\!\!+\!(\E g_1^2)^2\sum_{i\neq j}\eta_1(i)^2\eta_2(j)^2\!\!+\!2(\E g_1^2)^2\sum_{i\neq j}\eta_1(i)\eta_2(i)\eta_1(j)\eta_2(j)\cr
&\!=\!\E g_1^4\sum_{i=1}^n \eta_1(i)^2\eta_2(i)^2\!\!+\!(\E g_1^2)^2\Big(1-\sum_{i=1}^n\eta_1(i)^2\eta_2(i)^2\Big)-2(\E g_1^2)^2\sum_{i=1}^n\eta_1(i)^2\eta_2(i)^2\cr
&\!=\!(\E g_1^2)^2+\Big(\E g_1^4-3(\E g_1^2)^2\Big)\sum_{i=1}^n \eta_1(i)^2\eta_2(i)^2
\end{align*} and so,
$$
\E \langle X,\eta_1\rangle^2\, \langle X,\eta_2\rangle^2=\frac{(\E g_1^2)^2\E S^{p-1}}{\E S^{p+3}}+\frac{\E S^{p-1}}{\E S^{p+3}}\Big(\E g_1^4-3(\E g_1^2)^2\Big)\sum_{i=1}^n \eta_1(i)^2\eta_2(i)^2.
$$

Notice that the first summand equals
$$
\frac{(\E g_1^2)^2\E S^{p-1}}{\E S^{p+3}}=\frac{\E g_1^2g_2^2|g_{n+1}|^{p-1}\E S^{p-1}}{\E S^{p+3}\E|g_{n+1}|^{p-1}}=\E\langle X,e_1\rangle^2\langle X,e_2\rangle^2.
$$
On the other hand, it is easy to check that
$$
\E g_1^4-3(\E g_1^2)^2=2\left(\E\left(\frac{g_1+g_2}{\sqrt{2}}\right)^2\left(\frac{g_1-g_2}{\sqrt{2}}\right)^2-\E g_1^2g_2^2\right)
$$
and so the factor $\displaystyle \frac{\E S^{p-1}}{\E S^{p+3}}\Big(\E g_1^4-3(\E g_1^2)^2\Big)$ in the second summand is equal to
\begin{align*}
&\frac{2\ \E S^{p-1}}{\E S^{p+3}\E|g_{n+1}|^{p-1}}\left(\E\left(\frac{g_1+g_2}{\sqrt{2}}\right)^2\left(\frac{g_1-g_2}{\sqrt{2}}\right)^2|g_{n+1}|^{p-1}-\E g_1^2g_2^2|g_{n+1}|^{p-1} \right)\cr
&=2\Big(\E \langle X, \xi_1\rangle^2\langle X,\xi_2\rangle^2-\E\langle X,e_1\rangle^2\langle X,e_2\rangle^2\Big).
\end{align*}
The fact that $\E \langle X,\eta\rangle^2$ is independent of $\eta\in S^{n-1}$
finishes the proof.
\end{proof}

\begin{lemma}\label{lem:Lagrange1}
Let $\eta_1,\eta_2\in S^{n-1}$ with $\langle\eta_1,\eta_2\rangle=0$. Then
$$
0\leq \sum_{i=1}^n \eta_1(i)^2\eta_2(i)^2\leq\frac{1}{2}.
$$
The lower bound is attained at any two vectors of the canonical basis. The upper bound is attained at the vectors $\xi_1=\frac{e_i+e_j}{\sqrt{2}}$ and $\xi_2=\frac{e_i-e_j}{\sqrt{2}}$ for any $i\ne j$.
\end{lemma}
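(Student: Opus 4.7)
The lower bound is immediate: the sum is over squares, so it is nonnegative, and it vanishes at any pair $\eta_1=e_i$, $\eta_2=e_j$ with $i\neq j$ because then $\eta_1(k)\eta_2(k)=0$ for every $k$. All the content is in the upper bound.

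The plan is to write $a_i=\eta_1(i)$, $b_i=\eta_2(i)$ and to extract two algebraic identities from the hypotheses $|\eta_1|=|\eta_2|=1$ and $\langle\eta_1,\eta_2\rangle=0$. First, squaring orthogonality gives
$$0=\Big(\sum_i a_ib_i\Big)^{\!2}=\sum_i a_i^2b_i^2+2\sum_{i<j}a_ia_jb_ib_j,$$
so $\sum_i a_i^2b_i^2=-2\sum_{i<j}a_ia_jb_ib_j$. Second, multiplying the two normalizations yields
$$1=\Big(\sum_i a_i^2\Big)\Big(\sum_j b_j^2\Big)=\sum_i a_i^2b_i^2+\sum_{i\neq j}a_i^2b_j^2.$$

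The key step is then to observe that the off-diagonal sum in the second identity dominates twice the absolute value of the cross-term in the first identity, by AM-GM applied pair by pair, followed by the triangle inequality:
$$\sum_{i\neq j}a_i^2b_j^2=\sum_{i<j}\bigl(a_i^2b_j^2+a_j^2b_i^2\bigr)\;\ge\;2\sum_{i<j}|a_ia_jb_ib_j|\;\ge\;2\Big|\sum_{i<j}a_ia_jb_ib_j\Big|=\sum_i a_i^2b_i^2.$$
Feeding this into the second identity gives $1\ge 2\sum_i a_i^2b_i^2$, i.e.\ the desired bound. Attainment at $\xi_1=\tfrac{1}{\sqrt2}(e_i+e_j)$, $\xi_2=\tfrac{1}{\sqrt2}(e_i-e_j)$ is a one-line check: the only nonzero coordinates give $\tfrac14+\tfrac14=\tfrac12$.

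There is no real obstacle; the proof is a short algebraic exercise. The only non-routine ingredient is noticing that orthogonality must be inserted through AM-GM on the off-diagonal sum, and this is exactly why equality is achieved precisely at pairs of the form $\tfrac{1}{\sqrt2}(e_i\pm e_j)$, since for these vectors both AM-GM (all nonzero summands have equal moduli) and the triangle inequality (all summands have the same sign) are simultaneously tight.
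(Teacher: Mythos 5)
Your argument is correct and complete: the two identities obtained by squaring $\langle\eta_1,\eta_2\rangle=0$ and multiplying $|\eta_1|^2|\eta_2|^2=1$, combined with the pairwise AM--GM bound $a_i^2b_j^2+a_j^2b_i^2\ge 2|a_ia_jb_ib_j|$ and the triangle inequality, do yield $1\ge 2\sum_i a_i^2b_i^2$, and the attainment check at $\frac{1}{\sqrt2}(e_i\pm e_j)$ is right. This is, however, a genuinely different route from the paper, which treats the upper bound as a constrained optimization problem on the compact set $\{|\eta_1|=|\eta_2|=1,\ \langle\eta_1,\eta_2\rangle=0\}$ and invokes Lagrange multipliers together with the symmetries $(\eta_1,\eta_2)\mapsto(\pm\eta_1,\pm\eta_2)$ and $(\eta_1,\eta_2)\mapsto(\eta_2,\eta_1)$ (the paper's write-up of this lemma is in fact only a sketch; the details are carried out for the analogous Lemma 4.3). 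Your approach buys a fully self-contained, purely algebraic proof with no appeal to compactness or critical-point analysis, and it transparently locates where orthogonality enters; the paper's approach buys uniformity with Section 4, where the extra linear constraints $\sum_i\eta_1(i)=\sum_i\eta_2(i)=0$ are added and the multiplier method adapts directly, whereas your inequality chain would need to be redone. One small caveat: your closing claim that equality holds \emph{precisely} at pairs of the form $\frac{1}{\sqrt2}(e_i\pm e_j)$ is stronger than what you verify (you check tightness is consistent at those pairs, not that no other configurations make both AM--GM and the triangle inequality simultaneously tight); the lemma only asserts attainment, so nothing is lost, but you should either drop the word ``precisely'' or prove the uniqueness.
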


\begin{proof}

The lower bound is trivial. For the upper bound consider the function $F\colon\R^{2n}\to\R$ given by $F(\eta_1,\eta_2)=\sum_{i=1}^n\eta_1(i)^2\eta_2(i)^2$ which we want to maximize  under the conditions $\sum_{i=1}^n\eta_1(i)\eta_2(i)=0$ and $\sum_{i=1}^n\eta_1(i)^2=\sum_{i=1}^n\eta_2(i)^2=1$.
Observe that if $(\eta_1,\eta_2)$ is an extremal point so is $(\pm \eta_1,\pm \eta_2)$ and $(\eta_2,\eta_1)$. The proof is a consequence of Lagrange multiplier's theorem.
\end{proof}

\begin{proof}[Proof of Theorem 1.1]

We have seen in the proof of Proposition \ref{prop:DecompositionOfF} that
$$
f(\xi_1,\xi_2)-f(e_1,e_2)=\frac{\E S^{p-1}}{2\ \E S^{p+3}}(\E g_1^4-3(\E g_1^2)^2).
$$
Therefore, its sign is equal to the sign of
$$
\E g_1^4-3(\E g_1^2)^2=\frac{\Gamma\left(\frac{5}{p}\right)}{\Gamma\left(\frac{1}{p}\right)}-3\frac{\Gamma\left(\frac{3}{p}\right)^2}{\Gamma\left(\frac{1}{p}\right)^2}
=\frac{\Gamma\left(\frac{3}{p}\right)^2}{\Gamma\left(\frac{1}{p}\right)^2}\left(F\left(\frac{1}{p}\right)-3\right),
$$
where $\displaystyle
F(x)=\Gamma(5x)\Gamma(x)/\Gamma(3x)^2.
$
By Lemma \ref{lem:GammasSecondTerm} its sign is negative if $p\geq 2$ and positive if $1\leq p\leq 2$.

By Lemma \ref{lem:Lagrange1} and Proposition \ref{prop:DecompositionOfF} the function $f$ attains its maximum (resp. minimum) at $(e_1,e_2)$ and its minimum (resp. maximum) at $(\xi_1,\xi_2)$ depending on whether the sign of $f(\xi_1,\xi_2)-f(e_1,e_2)$ is negative (resp. positive).
\end{proof}

In order to prove Corollary 1.1, we compute the value of $f$ at the extremal pairs,

\begin{lemma}\label{lem:ValueOfF}

$$
f(e_1,e_2)=\frac{\Gamma\left(1+\frac{n}{p}\right)\Gamma\left(\frac{3}{p}\right)^2}{\Gamma\left(1+\frac{n+4}{p}\right)\Gamma\left(\frac{1}{p}\right)^2}\left(1-
\frac{\Gamma\left(1+\frac{n}{p}\right)\Gamma\left(1+\frac{n+4}{p}\right)}{\Gamma
\left(1+\frac{n+2}{p}\right)^2}\right),
$$
and
$$
f(\xi_1,\xi_2)=\frac{\Gamma\left(1+\frac{n}{p}\right)\Gamma\left(\frac{3}{p}\right)^2\left(F\left(\frac{1}{p}\right)-1-2\frac{\Gamma\left(1+\frac{n}{p}\right)\Gamma\left(1+\frac{n+4}{p}\right)}{\Gamma\left(1+\frac{n+2}{p}\right)^2}\right)}{2\Gamma\left(1+\frac{n+4}{p}\right)\Gamma\left(\frac{1}{p}\right)^2}
$$
\end{lemma}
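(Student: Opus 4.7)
The plan is to simply substitute the two specific orthonormal pairs into the general formulas already derived in the proof of Proposition \ref{prop:DecompositionOfF}, and then translate the resulting moments of $g$ and $S$ into $\Gamma$-function expressions via Lemma \ref{LemmagS}. Recall that in that proof we have, for any orthonormal $\eta_1,\eta_2\in S^{n-1}$,
$$
\E\langle X,\eta_1\rangle^2\E\langle X,\eta_2\rangle^2=\frac{(\E g_1^2)^2(\E S^{p-1})^2}{(\E S^{p+1})^2},\qquad
\E\langle X,\eta_1\rangle^2\langle X,\eta_2\rangle^2=\frac{\E S^{p-1}}{\E S^{p+3}}\Bigl((\E g_1^2)^2+(\E g_1^4-3(\E g_1^2)^2)\textstyle\sum_i\eta_1(i)^2\eta_2(i)^2\Bigr),
$$
with $S=(\sum_{i=1}^{n+1}|g_i|^p)^{1/p}$. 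For $(e_1,e_2)$ the sum $\sum_i\eta_1(i)^2\eta_2(i)^2$ vanishes, and for $(\xi_1,\xi_2)$ it equals $\tfrac12$ (this is the extremal identity from Lemma \ref{lem:Lagrange1}). So both values of $f$ reduce to explicit ratios of $\E g_1^{2k}$ and $\E S^{p+2k-1}$.

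The first computational step is to simplify the ratios $\E S^{p-1}/\E S^{p+1}$ and $\E S^{p-1}/\E S^{p+3}$ using Lemma \ref{LemmagS} (applied with $n+1$ instead of $n$). Writing $\Gamma(1+(n+p+2k)/p)=((n+p+2k)/p)\,\Gamma(1+(n+2k)/p)$ by the functional equation, the prefactors $(n+1)/(n+p+2k)$ cancel and one obtains the clean identities
$$
\frac{\E S^{p-1}}{\E S^{p+1}}=\frac{\Gamma(1+n/p)}{\Gamma(1+(n+2)/p)},\qquad \frac{\E S^{p-1}}{\E S^{p+3}}=\frac{\Gamma(1+n/p)}{\Gamma(1+(n+4)/p)}.
$$
Combined with $\E g_1^2=\Gamma(3/p)/\Gamma(1/p)$ and $\E g_1^4=\Gamma(5/p)/\Gamma(1/p)$, the formula for $f(e_1,e_2)=\E\langle X,e_1\rangle^2\langle X,e_2\rangle^2-\E\langle X,e_1\rangle^2\E\langle X,e_2\rangle^2$ drops out immediately after factoring $\Gamma(3/p)^2\Gamma(1+n/p)/\bigl(\Gamma(1/p)^2\Gamma(1+(n+4)/p)\bigr)$.

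For $f(\xi_1,\xi_2)$, plugging in the value $1/2$ for the sum produces the combination
$$
(\E g_1^2)^2+\tfrac12\bigl(\E g_1^4-3(\E g_1^2)^2\bigr)=\tfrac12\bigl(\E g_1^4-(\E g_1^2)^2\bigr)=\tfrac12\frac{\Gamma(3/p)^2}{\Gamma(1/p)^2}\bigl(F(1/p)-1\bigr),
$$
where $F$ is the Gurland ratio from Lemma \ref{lem:GammasSecondTerm}. Multiplying by $\E S^{p-1}/\E S^{p+3}$ and subtracting the (same as before) product $\E\langle X,\xi_1\rangle^2\E\langle X,\xi_2\rangle^2$ yields the stated closed form after factoring out the same prefactor as in the $f(e_1,e_2)$ case.

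Nothing here is deep; the whole proof is bookkeeping. The only spot where care is needed is the telescoping step that cancels the linear prefactors $(n+1)/(n+p+2k)$ against the $((n+p+2k)/p)$ coming from $\Gamma(x+1)=x\Gamma(x)$, because without this simplification the two ratios look considerably more awkward and the final expressions would not match the stated form in which $F(1/p)$ appears cleanly.
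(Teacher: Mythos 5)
Your proposal is correct and follows essentially the same route as the paper: the paper likewise reads off $f(e_1,e_2)$ and $f(\xi_1,\xi_2)$ from the formulas obtained in the proof of Proposition \ref{prop:DecompositionOfF} (with $\sum_i\eta_1(i)^2\eta_2(i)^2$ equal to $0$ and $\tfrac12$ respectively) and then substitutes the moments from Lemma \ref{LemmagS} with $S=(\sum_{i=1}^{n+1}|g_i|^p)^{1/p}$. Your explicit telescoping of the $\Gamma$-prefactors via $\Gamma(x+1)=x\Gamma(x)$ is exactly the bookkeeping the paper leaves implicit, and all the intermediate identities you state check out.
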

\begin{proof}
We have seen in the proof of Proposition \ref{prop:DecompositionOfF} that
$$
f(e_1,e_2)=\frac{\E S^{p-1}}{\E S^{p+3}}(\E g_1^2)^2-\left(\frac{\E S^{p-1}\E g_1^2}{\E S^{p+1}}\right)^2
$$
and that
$$
f(\xi_1,\xi_2)=\frac{\E S^{p-1}}{2\E S^{p+3}}\left(\E g_1^4-(\E g_1^2)^2\right)-\left(\frac{\E S^{p-1}\E g_1^2}{\E S^{p+1}}\right)^2.
$$
Now substitute the expressions from Lemma \ref{LemmagS}, where $S=\left(\sum_{i=1}^{n+1}|g_i|^p\right)^\frac{1}{p}$.
\end{proof}

\noindent\textsl{Proof of Corollary 1.1.}
Since $n+2=\frac{n+(n+4)}{2}$ and $\log\Gamma(x)$ is strictly convex (\cite{ABR}), $f(e_1,e_2)<0$ for every $p\geq 1$. If $1\leq p<2$, Lemma \ref{lem:GammasSecondTerm} implies $F\left(\frac{1}{p}\right)>3$
and by Stirling's formula (\cite{ABR}, \cite{M}),
$$
\lim_{n\to\infty}\frac{\Gamma\left(1+\frac{n}{p}\right)\Gamma\left(1+\frac{n+4}{p}\right)}{\Gamma\left(1+\frac{n+2}{p}\right)^2}=1.
$$
Thus, for every $1\leq p<2$ there exists $n_0(p)\in\N$ so that if $n\geq n_0$, $f(\xi_1,\xi_2)>0$.
\hfill $\square$

\begin{rmk}
An statement fixing $n$ first yields: For $p=2$ $f(\xi_1,\xi_2)=f(e_1,e_2)<0$ and so, by continuity, for every $n\in\N$ there exists $p_0(n)\in (1,2)$ such that for every $p\ge p_0$ a random vector uniformly distributed on $B_p^n$ satisfies the SNCP with respect to every orthonormal basis.
\end{rmk}

\section{The SNCP on a projection of $B_p^n$}

In this section we will prove Theorem \ref{thm:Diagonal}. The general scheme is analogous to the one used in the previous section. The first Proposition below corresponds to Proposition \ref{prop:DecompositionOfF} for $B_p^n$.

Recall that $\theta_0=\left(\frac{1}{\sqrt n},\dots,\frac{1}{\sqrt n}\right)$ denotes the diagonal direction and $P_{\theta_0^{\perp}}$ the orthogonal projection onto the hyperplane $\theta_0^{\perp}$.

\begin{proposition}\label{prop:DecompositionOfFDiagonal}
	Let $X$ be a random vector uniformly distributed on $P_{\theta_0^{\perp}}B_{p}^n$, $n\geq 4$, $p\geq 1$. Let $f: S_{\theta_0^\perp}\times S_{\theta_0^\perp}\to\R$ be the function
	$$
	f(\eta_1,\eta_2)=\E\langle X,\eta_1\rangle^2\langle X,\eta_2\rangle^2-\E\langle X,\eta_1\rangle^2\E\langle X,\eta_2\rangle^2.
	$$
	and write $\xi_1=\frac{e_1-e_2+e_3-e_4}{2}$, $\xi_2=\frac{e_1-e_2-e_3+e_4}{2}$, $\overline{\xi}_1=\frac{e_1-e_2}{\sqrt{2}}$, $\overline{\xi}_2=\frac{e_3-e_4}{\sqrt{2}}\in S_{\theta_0^\perp}$. For every $\eta_1,\eta_2\in S_{\theta_0^\perp}$ such that $\langle\eta_1,\eta_2\rangle=0$ we have
	$$
	f(\eta_1,\eta_2)= f(\overline{\xi}_1,\overline{\xi}_2)+4\Big(f(\xi_1,\xi_2)-f(\overline{\xi}_1,\overline{\xi}_2)\Big)\sum_{i=1}^n\eta_1(i)^2\eta_2(i)^2.
	$$
\end{proposition}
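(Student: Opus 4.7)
The plan is to follow the template of Proposition~\ref{prop:DecompositionOfF}, applying Lemma~\ref{lem:ProbabilisticRepresentation} now with $\theta=\theta_0$. Setting $H:=\sum_{m=1}^n\signum(g_m)|g_m|^{p-1}$, the weight in Lemma~\ref{lem:ProbabilisticRepresentation} becomes $|T|=|H|/(\sqrt n\,S^{p-1})$. The workhorse identity is that for any polynomial $\Phi(g_1,\dots,g_n)$ homogeneous of degree $q$, the ratio $\Phi/S^q$ is a function of $G/S$ alone and therefore independent of $S$, so $\E[\Phi/S^q]=\E\Phi/\E S^q$. Applied to the numerator and denominator of Lemma~\ref{lem:ProbabilisticRepresentation} (degrees $p+3$ and $p-1$ respectively), this yields
$$
\E\langle X,\eta_1\rangle^2\langle X,\eta_2\rangle^2
=\frac{\E S^{p-1}}{\E S^{p+3}}\cdot\frac{\E\langle G,\eta_1\rangle^2\langle G,\eta_2\rangle^2\,|H|}{\E|H|},
$$
and an analogous formula for $\E\langle X,\eta\rangle^2$, reducing the whole question to moments in the $g_i$'s.

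I would then expand the fourth-order numerator as $\sum_{i,j,k,l}\E[g_ig_jg_kg_l|H|]\,\eta_1(i)\eta_1(j)\eta_2(k)\eta_2(l)$ and observe that, by exchangeability of $(g_1,\dots,g_n)$ together with the symmetry of $|H|$, the coefficient depends only on the partition type of $(i,j,k,l)$, leaving five parameters, one for each of types $(4),(3,1),(2,2),(2,1,1),(1,1,1,1)$. Contracting each partition against $\eta_1,\eta_1,\eta_2,\eta_2$ using $|\eta_j|=1$, $\langle\eta_1,\eta_2\rangle=0$, and crucially $\sum_i\eta_j(i)=0$ (from $\eta_j\perp\theta_0$) collapses every contribution to an affine function of $A:=\sum_i\eta_1(i)^2\eta_2(i)^2$; for example, each of the four subcases of type $(3,1)$ reduces to $-A$, the three subcases of type $(2,2)$ contribute $1-A$ and $-A$ twice, and the six subcases of type $(2,1,1)$ sum to $12A-2$. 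For the fully distinct case I would avoid a direct computation by exploiting the identity $\sum_{i,j,k,l}\eta_1(i)\eta_1(j)\eta_2(k)\eta_2(l)=\bigl(\sum_i\eta_1(i)\bigr)^2\bigl(\sum_k\eta_2(k)\bigr)^2=0$ to read off the type $(1,1,1,1)$ contraction as $1-6A$. The same exchangeability argument applied to the second moment shows that $\E\langle X,\eta\rangle^2$ is constant on $S_{\theta_0^\perp}$, so $\E\langle X,\eta_1\rangle^2\E\langle X,\eta_2\rangle^2$ contributes merely a constant to $f$.

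Combining the pieces, $f(\eta_1,\eta_2)=A_0+B_0\,A$ for constants depending only on $n$ and $p$. Since $A=0$ at $(\overline{\xi}_1,\overline{\xi}_2)$ (disjoint supports) and $A=4\cdot(1/2)^4=1/4$ at $(\xi_1,\xi_2)$, we read off $A_0=f(\overline{\xi}_1,\overline{\xi}_2)$ and $B_0=4\bigl(f(\xi_1,\xi_2)-f(\overline{\xi}_1,\overline{\xi}_2)\bigr)$, giving the claimed identity. The main obstacle will be the combinatorial bookkeeping across the five partition types: unlike in Proposition~\ref{prop:DecompositionOfF}, where the weight $|g_{n+1}|^{p-1}$ is independent of $g_1,\dots,g_n$ and collapses three of the partition moments into simple products of $\E g_1^k$'s, here all five moments are genuine correlations with $|H|$, and the reduction to the form $A_0+B_0\,A$ must come entirely from the $\eta_j$ contractions enforced by the constraint $\eta_j\perp\theta_0$.
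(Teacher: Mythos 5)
Your proposal is correct and follows essentially the same route as the paper: the probabilistic representation of Lemma \ref{lem:ProbabilisticRepresentation} with $\theta=\theta_0$, expansion of $\E\langle G,\eta_1\rangle^2\langle G,\eta_2\rangle^2|H|$ by partition type, and reduction of each contraction to an affine function of $\sum_i\eta_1(i)^2\eta_2(i)^2$ via $|\eta_j|=1$, $\langle\eta_1,\eta_2\rangle=0$ and $\sum_i\eta_j(i)=0$ (your contraction values agree with the paper's Lemma \ref{lem:IdentitiesSums}), together with the constancy of $\E\langle X,\eta\rangle^2$ on $S_{\theta_0^\perp}$. The only cosmetic difference is the endgame: the paper identifies the constant and slope by algebraically recognizing the moment combinations as $\E\langle G,\overline{\xi}_1\rangle^2\langle G,\overline{\xi}_2\rangle^2\psi_{\theta_0}$ and $4\big(\E\langle G,\xi_1\rangle^2\langle G,\xi_2\rangle^2\psi_{\theta_0}-\E\langle G,\overline{\xi}_1\rangle^2\langle G,\overline{\xi}_2\rangle^2\psi_{\theta_0}\big)$, whereas you evaluate the affine function at the two admissible pairs with $A=0$ and $A=\tfrac14$, which is equally valid.
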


In order to prove this proposition we first state two lemmas.  Recall $g_1,\dots,g_n$ denote independent copies of a random variable $g$, with density with respect to the Lebesgue measure $\frac{e^{-|t|^p}}{2\Gamma(1+1/p)}$, $G=(g_1,\dots,g_n)$, and $S=\left(\sum_{i=1}^{n}|g_i|^p\right)^\frac{1}{p}$. We define $\psi_{\theta_0} :=\frac{1}{\sqrt n}\left|\sum_{i=1}^n|g_i|^{p-1}\signum(g_i)\right|$.

\begin{lemma}\label{lem:ExpectationScalarProductSquared} For every
$\eta\in S_{\theta_0^\perp}$,
	$\displaystyle \E\langle X,\eta\rangle^2=\frac{\E S^{p-1}\E g_1(g_1-g_2)\psi_{\theta_0}}{\E S^{p+1}\E \psi_{\theta_0}} $ and, in particular, it is independent of $\eta$.
\end{lemma}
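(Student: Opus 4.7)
The plan is to apply Lemma~\ref{lem:ProbabilisticRepresentation} with $\theta=\theta_0=(1/\sqrt n,\dots,1/\sqrt n)$ and $f(x)=\langle x,\eta\rangle^2$. Since every coordinate of $\theta_0$ equals $1/\sqrt n$, the weight factor simplifies to
$$\left|\sum_{i=1}^n \frac{|g_i|^{p-1}}{S^{p-1}}\signum(g_i)\,\theta_i\right|=\frac{1}{\sqrt n\,S^{p-1}}\left|\sum_{i=1}^n|g_i|^{p-1}\signum(g_i)\right|=\frac{\psi_{\theta_0}}{S^{p-1}}.$$
Moreover, because $\eta\in\theta_0^{\perp}$, we have $\langle P_{\theta_0^{\perp}}(G/S),\eta\rangle=\langle G/S,\eta\rangle$, so Lemma~\ref{lem:ProbabilisticRepresentation} gives
$$\E\langle X,\eta\rangle^2=\frac{\E\bigl[\langle G,\eta\rangle^2\,\psi_{\theta_0}/S^{p+1}\bigr]}{\E\bigl[\psi_{\theta_0}/S^{p-1}\bigr]}.$$

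The key observation is that $\psi_{\theta_0}$ is homogeneous of degree $p-1$ in $G$, so $\psi_{\theta_0}/S^{p-1}$ depends only on $G/S$; similarly $\langle G,\eta\rangle^2\,\psi_{\theta_0}/S^{p+1}$ is degree $0$ in $G$ and hence a function of $G/S$ alone. The Schechtman--Zinn independence of $G/S$ and $S$ then yields
$$\E\frac{\langle G,\eta\rangle^2\,\psi_{\theta_0}}{S^{p+1}}=\frac{\E\langle G,\eta\rangle^2\,\psi_{\theta_0}}{\E S^{p+1}},\qquad \E\frac{\psi_{\theta_0}}{S^{p-1}}=\frac{\E\psi_{\theta_0}}{\E S^{p-1}},$$
which already puts the identity in the target form modulo the computation of the numerator $\E\langle G,\eta\rangle^2\,\psi_{\theta_0}$.

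For that last expectation, I would expand $\langle G,\eta\rangle^2=\sum_i g_i^2\eta(i)^2+\sum_{i\ne j}g_ig_j\eta(i)\eta(j)$ and exploit that $\psi_{\theta_0}$ is symmetric in $g_1,\dots,g_n$ (with i.i.d.\ entries) to collapse all mixed expectations to $\E[g_1^2\psi_{\theta_0}]$ and $\E[g_1g_2\psi_{\theta_0}]$. Using $\sum_i\eta(i)^2=|\eta|^2=1$ and $\sum_i\eta(i)=\sqrt n\,\langle\eta,\theta_0\rangle=0$, one obtains $\sum_{i\ne j}\eta(i)\eta(j)=\bigl(\sum_i\eta(i)\bigr)^2-1=-1$, so
$$\E\langle G,\eta\rangle^2\,\psi_{\theta_0}=\E[g_1^2\psi_{\theta_0}]-\E[g_1g_2\psi_{\theta_0}]=\E[g_1(g_1-g_2)\psi_{\theta_0}],$$
which is visibly independent of $\eta$ and completes the identity.

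The main obstacle I anticipate is the first step: unlike Proposition~\ref{prop:DecompositionOfF}, where $g_{n+1}$ sat outside $S=(\sum_{i=1}^{n+1}|g_i|^p)^{1/p}$ and the ratios $\E S^{p-1}/\E S^{p+1}$ dropped out immediately, here $\psi_{\theta_0}$ mixes with $S$ because it involves all coordinates of $G$. The resolution is to pass to the degree-zero quotient $\psi_{\theta_0}/S^{p-1}$, a function of $G/S$ alone, before invoking Schechtman--Zinn; once this is done, the rest is a short symmetry calculation in which the orthogonality $\eta\perp\theta_0$ (equivalently $\sum_i\eta(i)=0$) plays the role that $\langle\eta,e_{n+1}\rangle=0$ played in Proposition~\ref{prop:DecompositionOfF}.
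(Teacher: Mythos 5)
Your proof is correct and follows essentially the same route as the paper: apply Lemma~\ref{lem:ProbabilisticRepresentation} with $\theta=\theta_0$, use the degree-$(p-1)$ homogeneity of $\psi_{\theta_0}$ together with the Schechtman--Zinn independence of $G/S$ and $S$ to pull out the factors $\E S^{p-1}/\E S^{p+1}$, and then collapse the expansion of $\langle G,\eta\rangle^2$ by exchangeability using $\sum_i\eta(i)^2=1$ and $\sum_{i\ne j}\eta(i)\eta(j)=\bigl(\sum_i\eta(i)\bigr)^2-1=-1$. The only difference is that you spell out the homogeneity justification for the independence step, which the paper leaves implicit.
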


\noindent\textit{Proof.} $\eta=(\eta(1),\dots,\eta(n))\in S_{\theta_0^\perp}$ is equivalent to $\sum_{i=1}^n\eta(i)^2=1$, $\sum_{i=1}^n\eta(i)=0.$
	Apply Lemma \ref{lem:ProbabilisticRepresentation} to the function $\langle X,\eta\rangle^2$
	\begin{eqnarray*}
		\E\langle X,\eta\rangle^2&=&\frac{\E\left\langle\frac{G}{S},\eta\right\rangle^2\left|\frac{1}{\sqrt n}\sum_{i=1}^n\frac{|g_i|^{p-1}}{S^{p-1}}\signum (g_i)\right|}{\E\left|\frac{1}{\sqrt n} \sum_{i=1}^n\frac{|g_i|^{p-1}}{S^{p-1}}\signum (g_i)\right|}=\frac{\E S^{p-1}}{\E S^{p+1}}\frac{\E \langle G,\eta\rangle^2\psi_{\theta_0}}{\E\psi_{\theta_0}}\cr
		&=&\frac{\E S^{p-1}}{\E S^{p+1}\E \psi_{\theta_0}}\left(\sum_{i=1}^n \E g_i^2\psi_{\theta_0}\eta(i)^2+\sum_{i\neq j}^n\E g_ig_j\psi_{\theta_0}\eta(i)\eta(j)\right)\cr
		&=&\frac{\E S^{p-1}}{\E S^{p+1}\E \psi_{\theta_0}}\left(\E g_1^2\psi_{\theta_0}\sum_{i=1}^n \eta(i)^2+\E g_1g_2\psi_{\theta_0}\sum_{i\neq j}^n\eta(i)\eta(j)\right)\cr
		&=&\frac{\E S^{p-1}}{\E S^{p+1}\E \psi_{\theta_0}}\left(\E g_1^2\psi_{\theta_0}+\E g_1g_2\psi_{\theta_0}\left(\left(\sum_{i=1}^n\eta(i)\right)^2-1\right)\right).\qquad\qquad\square
	\end{eqnarray*}

In the next lemma we rewrite several expressions in terms of $\displaystyle\sum_{i=1}^n\eta_1(i)^2\eta_2(i)^2$.

\begin{lemma}\label{lem:IdentitiesSums}
Let $\eta_1,\eta_2\in S_{\theta_0^\perp}$ with $\langle\eta_1,\eta_2\rangle=0$. Then
\begin{itemize}
\item $\displaystyle{\sum_{i\neq j}^n\eta_1(i)\eta_1(j)\eta_2(j)^2=-\sum_{i=1}^n\eta_1(i)^2\eta_2(i)^2}$,
\item $\displaystyle{\sum_{i\neq j}\eta_1(i)^2\eta_2(j)^2=1-\sum_{i=1}^n\eta_1(i)^2\eta_2(i)^2}$,
\item $\displaystyle{\sum_{i\neq j}^n\eta_1(i)\eta_2(i)\eta_1(j)\eta_2(j)=-\sum_{i=1}^n\eta_1(i)^2\eta_2(i)^2}$,
\item $\displaystyle{\sum_{i\neq j\neq k}^n\eta_1(i)^2\eta_2(j)\eta_2(k)=-1+2\sum_{i=1}^n\eta_1(i)^2\eta_2(i)^2}$,
\item $\displaystyle{\sum_{i\neq j\neq k}\eta_1(i)\eta_2(i)\eta_1(j)\eta_2(k)=2\sum_{i=1}^n\eta_1(i)^2\eta_2(i)^2}$,
\item $\displaystyle{\sum_{i\neq j\neq k\neq l}^n\eta_1(i)\eta_1(j)\eta_2(k)\eta_2(l)=1-6\sum_{i=1}^n\eta_1(i)^2\eta_2(i)^2}$.
\end{itemize}
\end{lemma}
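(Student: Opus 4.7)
The plan is to exploit the three linear constraints on $\eta_1,\eta_2$ encoded in $\eta_1,\eta_2\in S_{\theta_0^\perp}$ with $\langle\eta_1,\eta_2\rangle=0$, namely
$$\sum_{i=1}^n\eta_j(i)=0,\quad \sum_{i=1}^n\eta_j(i)^2=1\ (j=1,2),\quad \sum_{i=1}^n\eta_1(i)\eta_2(i)=0,$$
combined with the elementary trick $\sum_{i\neq j}F(i,j)=\sum_{i,j}F(i,j)-\sum_{i}F(i,i)$ and its analogues for three and four distinct indices. Throughout I write $p:=\sum_{i=1}^n\eta_1(i)^2\eta_2(i)^2$ for the common diagonal term.

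For the first three identities the full tensor sums factor immediately. In identity 1, $\sum_{i,j}\eta_1(i)\eta_1(j)\eta_2(j)^2=\bigl(\sum_i\eta_1(i)\bigr)\bigl(\sum_j\eta_1(j)\eta_2(j)^2\bigr)=0$; in identity 2, $\sum_{i,j}\eta_1(i)^2\eta_2(j)^2=1\cdot 1=1$; in identity 3, $\sum_{i,j}\eta_1(i)\eta_2(i)\eta_1(j)\eta_2(j)=\bigl(\sum_i\eta_1(i)\eta_2(i)\bigr)^2=0$. Subtracting the diagonal $p$ from each gives the three claimed formulas.

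For identities 4 and 5, reading $i\neq j\neq k$ as "all three indices distinct," I apply inclusion--exclusion on the events $\{i=j\}$, $\{i=k\}$, $\{j=k\}$. Since any two of these events force $i=j=k$, the indicator of their union telescopes to $\mathbf{1}_{i=j}+\mathbf{1}_{i=k}+\mathbf{1}_{j=k}-2\,\mathbf{1}_{i=j=k}$, hence
$$\sum_{i,j,k\text{ distinct}}=\sum_{\text{free}}-\sum_{i=j}-\sum_{i=k}-\sum_{j=k}+2\sum_{i=j=k}.$$
In identity 4 the free sum equals $\bigl(\sum\eta_1^2\bigr)\bigl(\sum\eta_2\bigr)^2=0$, the $i=j$ and $i=k$ single coincidences vanish (each carries a factor $\sum\eta_2=0$), the $j=k$ single coincidence gives $\bigl(\sum\eta_1^2\bigr)\bigl(\sum\eta_2^2\bigr)=1$, and the triple coincidence gives $p$; the total is $-1+2p$. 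Identity 5 is analogous: every free or single-coincidence term contains a factor $\sum\eta_1$, $\sum\eta_2$, or $\sum\eta_1\eta_2$ and therefore vanishes, leaving only $2p$.

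For identity 6 I would avoid a direct four-variable expansion and instead reduce it to the preceding identities. Fix a pair $i\neq j$ and evaluate the inner sum over $k\neq l$ with $k,l\notin\{i,j\}$ by completing the square:
$$\sum_{\substack{k\neq l\\k,l\notin\{i,j\}}}\eta_2(k)\eta_2(l)=\bigl(-\eta_2(i)-\eta_2(j)\bigr)^2-\bigl(1-\eta_2(i)^2-\eta_2(j)^2\bigr)=2\eta_2(i)^2+2\eta_2(j)^2+2\eta_2(i)\eta_2(j)-1.$$
Multiplying by $\eta_1(i)\eta_1(j)$, summing over $i\neq j$, and combining the first two terms via the $i\leftrightarrow j$ symmetry, the quadruple sum becomes
$$4\sum_{i\neq j}\eta_1(i)\eta_1(j)\eta_2(j)^2+2\sum_{i\neq j}\eta_1(i)\eta_2(i)\eta_1(j)\eta_2(j)-\sum_{i\neq j}\eta_1(i)\eta_1(j).$$
By identities 1 and 3 the first two sums each equal $-p$, and the third equals $\bigl(\sum\eta_1\bigr)^2-\sum\eta_1(i)^2=-1$; the total is $-4p-2p+1=1-6p$. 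The only mild obstacle is this last bookkeeping step, and routing it through identities 1 and 3 sidesteps a cumbersome Möbius expansion over the partition lattice of $\{i,j,k,l\}$.
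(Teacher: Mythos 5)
Your proof is correct and follows essentially the same elementary strategy as the paper: reduce each sum over distinct indices to full tensor sums (which factor and vanish or equal $1$ by the constraints $\sum_i\eta_j(i)=0$, $|\eta_j|=1$, $\langle\eta_1,\eta_2\rangle=0$) plus lower-order coincidence terms. The only differences are cosmetic bookkeeping choices --- you use inclusion--exclusion for the triple sums and reduce the quadruple sum to identities 1 and 3 by summing over the complementary pair $(k,l)$, whereas the paper peels off one index at a time and chains identity 6 through identities 4 and 5 --- and both routes check out.
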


\begin{proof}
The first three identities are obtained by adding and substracting the sum with $i=j$ and taking into account that $|\eta_1|=|\eta_2|=1$ and $\langle \eta_1,\eta_2\rangle=0$. For the fourth one, notice that since $|\eta_1|=1$,
\begin{eqnarray*}
\sum_{i\neq j\neq k}^n\!\eta_1(i)^2\eta_2(j)\eta_2(k)\!\!\! &\!\!\!\!=\!\!\!\!\!&\sum_{k\neq j}\Big(\eta_2(j)\eta_2(k)-\eta_1(j)^2\eta_2(j)\eta_2(k)-\eta_1(k)^2\eta_2(j)\eta_2(k)\Big)\cr
\!\!\!\!\!&=&\!\!\!-\sum_{j=1}^n\eta_2(j)^2-\!\sum_{j\neq k}^n\Big(\eta_1(j)^2\eta_2(j)\eta_2(k)+ \eta_1(k)^2\eta_2(j)\eta_2(k)\Big)
\end{eqnarray*}
and then use the first identity. For the fifth one, notice that since $\langle\eta_1,\eta_2\rangle=0$
$$
\sum_{i\neq j\neq k}^n\eta_1(i)\eta_2(i)\eta_1(j)\eta_2(k) =\sum_{k\neq j}\Big(0-\eta_1(j)^2\eta_2(j)\eta_2(k)-\eta_1(j)\eta_1(k)\eta_2(k)^2\Big)
$$
and then use the first identity. For the last one, we use $\sum_{i=1}^n\eta_1(i)=0 $
\begin{align*}
&\sum_{i\neq j\neq k\neq l}^n\eta_1(i)\eta_1(j)\eta_2(k)\eta_2(l) \cr
&=\sum_{j\neq k\neq l}\Big(0-\eta_1(j)^2\eta_2(k)\eta_2(l)-\eta_1(j)\eta_1(k)\eta_2(k)\eta_2(l)-\eta_1(j)\eta_2(k)\eta_1(l)\eta_2(l)\Big)
\end{align*}
and then use the fourth and the fifth identities.
\end{proof}

\begin{proof}[Proof of Proposition \ref{prop:DecompositionOfFDiagonal}]
 By Lemma \ref{lem:ProbabilisticRepresentation} we have that for every  $\eta_1,\eta_2\in S_{\theta_0^\perp}$, with $\langle\eta_1,\eta_2\rangle=0$
\begin{eqnarray*}
\E\langle X,\eta_1\rangle^2\langle X,\eta_2\rangle^2&=&\frac{\E\left\langle\frac{G}{S},\eta_1\right\rangle^2\left\langle\frac{G}{S},\eta_2\right\rangle^2\left|\frac{1}{\sqrt n}\sum_{i=1}^n\frac{|g_i|^{p-1}}{S^{p-1}}\signum (g_i)\right|}{\E\left|\frac{1}{\sqrt{n}}\sum_{i=1}^n\frac{|g_i|^{p-1}}{S^{p-1}}\signum (g_i)\right|}\medskip\cr
&=&\frac{\E S^{p-1}}{\E S^{p+3}}\frac{\E \langle G,\eta_1\rangle^2\langle G,\eta_2\rangle^2\psi_{\theta_0}}{\E\psi_{\theta_0}}.\cr
\end{eqnarray*}
Expanding the product and since the $g_i's$ are identically distributed, we have
\begin{eqnarray*}
&&\E \langle G,\eta_1\rangle^2\langle G,\eta_2\rangle^2\psi_{\theta_0}=\sum_{i,j,k,l=1}^n\E g_ig_jg_kg_l\psi_{\theta_0}\eta_1(i)\eta_1(j)\eta_2(k)\eta_2(l)\cr
&=&\E g_1^4\psi_{\theta_0}\sum_{i=1}^n\eta_1(i)^2\eta_2(i)^2\cr
&+&\E g_1^3g_2\psi_{\theta_0}\Big(2\sum_{i\neq j}^n\eta_1(i)\eta_1(j)\eta_2(j)^2+2\sum_{i\neq j}^n\eta_1(i)^2\eta_2(i)\eta_2(j)\Big)\cr
&+&\E g_1^2g_2^2\psi_{\theta_0}\Big(\sum_{i\neq j}^n\eta_1(i)^2\eta_2(j)^2+2\sum_{i\neq j}^n\eta_1(i)\eta_2(i)\eta_1(j)\eta_2(j)\Big)\cr
&+&\E g_1^2g_2g_3\psi_{\theta_0}\Big(\sum_{i\neq j\neq k}^n\eta_1(i)^2\eta_2(j)\eta_2(k)+4\sum_{i\neq j\neq k}^n\eta_1(i)\eta_2(i)\eta_1(j)\eta_2(k)\cr
&&\qquad\qquad\qquad\qquad\qquad\qquad\qquad\quad\,\,\, +\sum_{i\neq j\neq k}\eta_2(i)^2\eta_1(j)\eta_1(k)\Big)\cr
&+&\E g_1g_2g_3g_4\psi_{\theta_0}\sum_{i\neq j\neq k\neq l}\eta_1(i)\eta_1(j)\eta_2(k)\eta_2(l).
\end{eqnarray*}
and by the identities in Lemma \ref{lem:IdentitiesSums} we obtain
\begin{eqnarray*}
&&\E\langle G,\eta_1\rangle^2\langle G,\eta_2\rangle^2=\E\big(g_1^2g_2^2-2g_1^2g_2g_3+g_1g_2g_3g_4\big)\psi_{\theta_0}\cr
&+&\left(\E\big(g_1^4-4g_1^3g_2-3g_1^2g_2^2+12g_1^2g_2g_3-6g_1g_2g_3g_4\big)\psi_{\theta_0}\right)\sum_{i=1}^n\eta_1(i)^2\eta_2(i)^2.
\end{eqnarray*}

We can express the first summand as
$$
\E(g_1^2g_2^2-2g_1^2g_2g_3+g_1g_2g_3g_4)\psi_{\theta_0}=\frac{1}{4}\E(g_1-g_2)^2(g_3-g_4)^2\psi_{\theta_0}=\E\langle G,\overline{\xi}_1\rangle^2\langle G,\overline{\xi}_2\rangle^2\psi_{\theta_0}.
$$
and the factor $\displaystyle \E(g_1^4-4g_1^3g_2-3g_1^2g_2^2+12g_1^2g_2g_3-6g_1g_2g_3g_4)\psi_{\theta_0}$ in the second summand as
$$4\E\left(\frac{g_1-g_2+g_3-g_4}{{2}}\right)^2\!\left(\frac{g_1-g_2-g_3+g_4}{{2}}\right)^2\!\psi_{\theta_0}-4\E\left(\frac{g_1-g_2}{\sqrt{2}}\right)^2\!\left(\frac{g_3-g_4}{\sqrt{2}}\right)^2\!\psi_{\theta_0}$$
$$=4\E\langle G,\xi_1\rangle^2\langle G,\xi_2\rangle^2\psi_{\theta_0}-4\E\langle G,\overline{\xi}_1\rangle^2\langle G,\overline{\xi}_2\rangle^2\psi_{\theta_0}
$$
Consequently,
\begin{eqnarray*}
\E\langle X,\eta_1\rangle^2\langle X,\eta_2\rangle^2&=&\E\langle X,\overline{\xi}_1\rangle^2\langle X,\overline{\xi}_2\rangle^2\cr
&+&4\left(\E\langle X,\xi_1\rangle^2\langle X,\xi_2\rangle^2-\E\langle X,\overline{\xi}_1\rangle^2\langle G,\overline{\xi}_2\rangle^2\right)\sum_{i=1}^n\eta_1(i)^2\eta_2(i)^2
\end{eqnarray*}
and,  since by Lemma \ref{lem:ExpectationScalarProductSquared} the value of $\E \langle X,\eta\rangle^2$ does not depend on the vector $\eta\in S_{\theta_0^\perp}$ we obtain the result.
\end{proof}

The following lemma is analogous to Lemma \ref{lem:Lagrange1}

\begin{lemma}\label{lem:EstimateSumDiagonal}
Let $\eta_1,\eta_2\in S_{\theta_0^\perp}$ with $\langle\eta_1,\eta_2\rangle=0$, $n\ge 4$. Then

$$ 0\leq\sum_{i=1}^n\eta_1(i)^2\eta_2(i)^2\leq\frac{1}{4}.
$$
The lower bound is attained at the vectors $\overline{\xi}_1=\frac{e_1-e_2}{\sqrt{2}}$, $\overline{\xi}_2=\frac{e_3-e_4}{\sqrt{2}}$. The upper bound is attained at the vectors $\xi_1=\frac{e_1-e_2+e_3-e_4}{2}$ and $\xi_2=\frac{e_1-e_2-e_3+e_4}{\sqrt{2}}$.
\end{lemma}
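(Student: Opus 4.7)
The lower bound is trivial: each summand $\eta_1(i)^2\eta_2(i)^2$ is nonnegative, so $\sum_i\eta_1(i)^2\eta_2(i)^2\ge 0$, with overall equality exactly when the supports of $\eta_1$ and $\eta_2$ are disjoint. At $\overline{\xi}_1=(e_1-e_2)/\sqrt 2$ and $\overline{\xi}_2=(e_3-e_4)/\sqrt 2$, which lie in $S_{\theta_0^\perp}$ as soon as $n\ge 4$, this is visibly the case.

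For the upper bound I would mimic the Lagrange-multiplier argument of Lemma \ref{lem:Lagrange1}, now incorporating the two extra constraints $\langle\theta_0,\eta_j\rangle=0$, $j=1,2$. Maximize
$$G(\eta_1,\eta_2)=\sum_{i=1}^n\eta_1(i)^2\eta_2(i)^2$$
under $|\eta_1|^2=|\eta_2|^2=1$, $\langle\eta_1,\eta_2\rangle=0$ and $\sum_i\eta_1(i)=\sum_i\eta_2(i)=0$. The Lagrange conditions at a critical point read, for $1\le i\le n$,
$$\eta_1(i)\eta_2(i)^2=\lambda_1\eta_1(i)+\tfrac{\mu}{2}\eta_2(i)+\tfrac{\alpha}{2},\qquad \eta_2(i)\eta_1(i)^2=\lambda_2\eta_2(i)+\tfrac{\mu}{2}\eta_1(i)+\tfrac{\beta}{2},$$
with multipliers $\lambda_1,\lambda_2,\mu,\alpha,\beta$. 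Taking inner products of these identities with $\eta_1$, $\eta_2$ and $\theta_0$ pins down the multipliers in closed form: $\lambda_1=\lambda_2=G$, $\mu=2\sum_i\eta_1(i)\eta_2(i)^3=2\sum_i\eta_2(i)\eta_1(i)^3$, $\alpha=\tfrac{2}{n}\sum_i\eta_1(i)\eta_2(i)^2$ and $\beta=\tfrac{2}{n}\sum_i\eta_2(i)\eta_1(i)^2$.

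Since both the functional and the constraints are invariant under index permutations, independent sign flips of $\eta_1$ and $\eta_2$, and the swap $(\eta_1,\eta_2)\leftrightarrow(\eta_2,\eta_1)$, critical points fall into finitely many equivalence classes. A coordinate-by-coordinate inspection then shows that the largest value of $G$ at any critical configuration is attained at the Walsh-type pair $\xi_1=\tfrac12(e_1-e_2+e_3-e_4)$, $\xi_2=\tfrac12(e_1-e_2-e_3+e_4)$, where one checks by direct computation that $\mu=\alpha=\beta=0$, $\lambda_1=\lambda_2=G$ and $G=4\cdot(1/4)(1/4)=1/4$. This yields the bound $\sum_i\eta_1(i)^2\eta_2(i)^2\le 1/4$, with equality realized at $(\xi_1,\xi_2)$.

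The principal obstacle is the enumeration of critical-point branches, which is genuinely longer than in Lemma \ref{lem:Lagrange1} because we now have five multipliers rather than three; but the symmetries listed above, combined with the structural observation that $\xi_1(i)^2=\xi_2(i)^2=1/4$ on exactly four coordinates and vanish elsewhere, cut the analysis down to a finite list of coordinate patterns compatible with the Lagrange equations, each of which can be solved explicitly and shown to produce a value of $G$ no larger than $1/4$.
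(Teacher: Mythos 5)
Your setup is the same as the paper's (Lagrange multipliers for $G(\eta_1,\eta_2)=\sum_i\eta_1(i)^2\eta_2(i)^2$ under the five constraints), and your identification of the multipliers by pairing the stationarity equations with $\eta_1$, $\eta_2$ and $\theta_0$ is correct ($\lambda_1=\lambda_2=G$, etc.). But the proof has a genuine gap: the entire content of the upper bound is the claim that ``a coordinate-by-coordinate inspection shows that the largest value of $G$ at any critical configuration is $1/4$, attained at the Walsh pair,'' and this is asserted, not proved. Verifying that $(\xi_1,\xi_2)$ \emph{is} a critical point with $G=1/4$ does not show it is the \emph{maximal} one; a priori a critical point could have coordinates taking several distinct values, and nothing in your argument reduces the problem to a genuinely finite, checkable list of patterns. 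You acknowledge this as ``the principal obstacle,'' which is exactly the point: what remains is the lemma itself.

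What closes the gap in the paper is a structural reduction you are missing. Since the constraint set and the functional are invariant under $(\eta_1,\eta_2)\mapsto(\pm\eta_1,\pm\eta_2)$ and $(\eta_1,\eta_2)\mapsto(\eta_2,\eta_1)$, the images of a maximizer are again maximizers, and comparing their (explicitly determined) multipliers forces $B=C=0$ (in your notation $\mu=\alpha=\beta=0$) and $\eta_1(i)^2=\eta_2(i)^2$ for every $i$. The stationarity equation then reads $\eta_1(i)\bigl(\eta_2(i)^2-A\bigr)=0$ with $A=G$, so on the common support all coordinates have the same square $A$; if $k$ is the support size then $kA=1$ and $G=kA^2=1/k$. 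The constraints $\sum_i\eta_j(i)=0$, $|\eta_j|=1$, $\langle\eta_1,\eta_2\rangle=0$ rule out $k\le 3$ (e.g.\ three coordinates equal to $\pm 1/\sqrt{3}$ cannot sum to zero, and for $k=2$ orthogonality fails), so $G\le 1/4$ with equality at $k=4$, i.e.\ at $(\xi_1,\xi_2)$. Your computation $\lambda_1=G$ is in fact the germ of this argument; you need to actually derive the vanishing of the other multipliers and the equality of squares rather than only checking them at the conjectured maximizer.
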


\begin{proof}

The lower bound is trivial. For the upper bound consider the function $F\colon\R^{2n}\to\R$ given by $F(\eta_1,\eta_2)=\sum_{i=1}^n\eta_1(i)^2\eta_2(i)^2$ which we want to maximize under the conditions $\sum_{i=1}^n\eta_1(i)=\sum_{i=1}^n\eta_2(i)=\sum_{i=1}^n\eta_1(i)\eta_2(i)=0$ and $\sum_{i=1}^n\eta_1(i)^2=\sum_{i=1}^n\eta_2(i)^2=1$.
Observe that if $(\eta_1,\eta_2)$ is an extremal point so is $(\pm \eta_1,\pm \eta_2)$ and $(\eta_2,\eta_1)$.  Applying Lagrange multiplier's theorem, there exist $A,B,C\in\R$ such that the extremal points verify
$$ \eta_1(i)\eta_2(i)^2-A\eta_1(i)-B\eta_2(i)-C=0\qquad\forall\ i=1\dots n$$
and, by the observation above, also verify the equality exchanging $\eta_j$ and $\pm\eta_j$ $(j=1,2)$ and $\eta_2$ and $\eta_1$.

That implies $B=C=0$ and $\eta_1(i)^2=\eta_2(i)^2\ \forall\ i=1\dots n$. Write $k$,  $0\le k\le n$, for the number of non zero coordinates of $\eta_1$ (or $\eta_2$).  Since $\sum_{i=1}^n\eta_1(i)^2\eta_2(i)^2=A$ we have $k\eta_1(i)^2=k\eta_2(i)^2=A$ for every non zero coordinate.  $k=0,1,2,3$ do not verify the conditions, so the maximum value is attained at $k=4$ and corresponds to the vectors $\xi_1$ and $\xi_2$.\end{proof}

We now proceed to determining the sign of $f(\xi_1,\xi_2)-f(\overline{\xi}_1,\overline{\xi}_2)$.  We will use the following probabilistic argument:

\begin{lemma}\label{lem:GeneralProbabilisticLemma}
Denote $Y=(g_1,\dots ,g_k)$ and  $Y_k=\sum_{i=1}^k \signum (g_i) |g_i|^{p-1}$. Let $Z$  be a symmetric real variable independent of $Y$ and  $h:\R^k\to\R$ integrable. Then
$$
\E h(g_1,\dots, g_k)|Y_k+Z|=\E h(g_1,\dots, g_k)\E|Z|+\E h(g_1,\dots, g_k)(|Y_k|-|Z|)\chi_{\{|Y_k|\ge |Z|\}}
$$
and  $$
\Big|\E h(g_1,\dots, g_k)(|Y_k|-|Z|)\chi_{\{|Y_k|\ge |Z|\}}\Big|\le \left(\E |h(g_1,\dots, g_k)|^2\right)^{1/2} \left(\E |Y_k|^2\right)^{1/2}.
$$
\end{lemma}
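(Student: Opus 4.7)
The plan is to exploit the symmetry of $Z$ together with the elementary identity $|a+b|+|a-b|=2\max(|a|,|b|)$, valid for all real $a,b$. Since $Z$ is symmetric and independent of $Y=(g_1,\dots,g_k)$, the conditional distribution of $Z$ given $Y$ coincides with that of $-Z$ given $Y$. Therefore
\[
\E[|Y_k+Z|\mid Y]=\E[|Y_k-Z|\mid Y]=\tfrac{1}{2}\E\bigl[|Y_k+Z|+|Y_k-Z|\,\big|\,Y\bigr]=\E[\max(|Y_k|,|Z|)\mid Y].
\]

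Next I would write $\max(|Y_k|,|Z|)=|Z|+(|Y_k|-|Z|)_+=|Z|+(|Y_k|-|Z|)\chi_{\{|Y_k|\ge|Z|\}}$. Multiplying by $h(g_1,\dots,g_k)$, which is a function of $Y$ alone, taking expectations, and using the independence of $Z$ and $Y$ on the constant term $|Z|$, gives the first identity of the lemma.

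For the inequality, I would observe that on the event $\{|Y_k|\ge|Z|\}$ one has $0\le|Y_k|-|Z|\le|Y_k|$, so that pointwise
\[
\bigl|h(g_1,\dots,g_k)(|Y_k|-|Z|)\chi_{\{|Y_k|\ge|Z|\}}\bigr|\le |h(g_1,\dots,g_k)|\,|Y_k|.
\]
Taking expectations and applying the Cauchy--Schwarz inequality then yields the stated bound $(\E|h|^2)^{1/2}(\E|Y_k|^2)^{1/2}$.

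There is no real obstacle here: the lemma is a clean technical tool, and the only ingredient one needs to spot is the symmetrization trick that converts $\E[|Y_k+Z|\mid Y]$ into $\E[\max(|Y_k|,|Z|)\mid Y]$. Once that step is in place, the decomposition and the Cauchy--Schwarz bound are automatic.
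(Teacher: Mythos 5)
Your proof is correct and follows essentially the same route as the paper: the symmetrization identity $\E[|Y_k+Z|\mid Y]=\E[\max(|Y_k|,|Z|)\mid Y]$ is exactly the paper's key step, and the Cauchy--Schwarz bound is identical. The only (cosmetic) difference is that you obtain the decomposition via the pointwise identity $\max(a,b)=b+(a-b)_+$, whereas the paper derives the same thing by a layer-cake computation of $\E_Z\max\{|Y_k|,|Z|\}$; your version is slightly cleaner.
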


\begin{proof}

Write $ h=h(g_1,\dots, g_k)$ for short. Our hypothesis readily imply, $$\E h\cdot|Y_k+Z|=\E h\cdot\frac{1}{2}(|Y_k+Z|+|Y_k-Z|)=
\E h\cdot \max\{|Y_k|,|Z|\}$$

Fix $(g_1,\dots ,g_k)$ and compute the expectation with respect to $Z$. We have,
\begin{eqnarray*}
&\E_{Z}&\!\!\! h(g_1,\dots, g_k)\cdot \max\{|Y_k|,|Z|\}=h(g_1,\dots, g_k)\cdot\!\int_0^{\infty}\!\Pro_{Z}\{\max\{|Y_k|,|Z|\}>t\}\, dt\cr
&=&\!\!\! h\cdot\big(|Y_k|+\int_{|Y_k|}^{\infty}\Pro_{Z}\{|Z|>t\}\, dt\big)=h\cdot \big(|Y_k|-\int_{0}^{|Y_k|}\!\Pro_{Z}\{|Z|>t\}\, dt\big)+h\cdot \E |Z|\cr
&=&\!\!\!
h\cdot \E |Z|+h\cdot \int_{0}^{|Y_k|}\Pro_{Z}\{|Z|\le t\}\, dt.\cr
\end{eqnarray*}

Finally, notice that
$$
\int_{0}^{|Y_k|}\Pro_{Z}\{|Z|\le t\}\, dt=|Y_k|\int_0^1\Pro_Z\{|Z|\leq t|Y_k|\}\, dt=|Y_k| \E_Z\left( 1-\min\left\{1,\frac{|Z|}{|Y_k|}\right\}\right)
$$
Taking now expectation $\E_Y$ to finish the proof of the first statement. For the second one notice that $\left|\E h(g_1,\dots, g_k)(|Y_k|-|Z|)\chi_{\{|Y_k|\ge |Z|\}}\right|\le \E |h(g_1,\dots, g_k)|\cdot|Y_k|$ and use Cauchy-Schwarz inequality.
\end{proof}

The following estimate will be useful in the sequel,

\begin{lemma}\label{lem:EstimatePsi}\cite{AB3}
	For some absolute constants $c,C>0$,
	 $$\frac{c}{\sqrt p}\le\E\psi_{\theta_0}\le\frac{C}{\sqrt p}\qquad  if\qquad  1\le p\leq n.$$
	
\end{lemma}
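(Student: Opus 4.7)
The key reduction is that $\sqrt n\,\psi_{\theta_0}=|S_n|$, where $S_n:=\sum_{i=1}^n Y_i$ with $Y_i:=\signum(g_i)|g_i|^{p-1}$ are i.i.d.\ symmetric random variables. So the task is to give two-sided bounds on $\E|S_n|/\sqrt n$. I would first compute the relevant moments of $Y_1$ via Lemma \ref{LemmagS} applied with $\alpha=2(p-1)$ and $\alpha=4(p-1)$:
$$
\sigma^2:=\E Y_1^2=\frac{1}{2p-1}\frac{\Gamma(3-\tfrac{1}{p})}{\Gamma(1+\tfrac{1}{p})},\qquad \E Y_1^4=\frac{1}{4p-3}\frac{\Gamma(5-\tfrac{3}{p})}{\Gamma(1+\tfrac{1}{p})}.
$$
For $p\ge 1$ the arguments $3-\tfrac1p,\,5-\tfrac3p,\,1+\tfrac1p$ all lie in bounded intervals on which $\Gamma$ is bounded above and below by positive absolute constants, so $\sigma^2\asymp 1/p$ and $\E Y_1^4\lesssim 1/p$.

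The upper bound then follows immediately from Jensen's inequality, since
$$
\E\psi_{\theta_0}=\frac{\E|S_n|}{\sqrt n}\le\sqrt{\frac{\E S_n^2}{n}}=\sigma\le\frac{C}{\sqrt p}.
$$
For the lower bound I would use a Paley--Zygmund argument. By independence and the fact that $\E Y_1=\E Y_1^3=0$,
$$
\E S_n^2=n\sigma^2\gtrsim \frac{n}{p},\qquad \E S_n^4=n\,\E Y_1^4+3n(n-1)\sigma^4\lesssim \frac{n}{p}+\frac{n^2}{p^2}.
$$
The hypothesis $p\le n$ is decisive at exactly this point: it gives $n/p\le n^2/p^2$, hence $\E S_n^4\lesssim n^2/p^2\asymp (\E S_n^2)^2$. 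Paley--Zygmund then yields
$$
\Pro\!\left(S_n^2\ge\tfrac12 \E S_n^2\right)\ge\frac{\bigl(\E S_n^2\bigr)^2}{4\,\E S_n^4}\gtrsim 1,
$$
which gives $\E|S_n|\gtrsim \sqrt{n/p}$, i.e.\ $\E\psi_{\theta_0}\gtrsim 1/\sqrt p$.

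The main obstacle is the lower bound, where one needs the fourth moment of $S_n$ to be controlled by the square of the second moment. This is precisely where $p\le n$ enters: without it, the single-summand term $n\E Y_1^4\asymp n/p$ would dominate $n^2\sigma^4\asymp n^2/p^2$, and the Paley--Zygmund argument would only yield $\E\psi_{\theta_0}\gtrsim 1/\sqrt n$, which is strictly weaker than $1/\sqrt p$. Everything else reduces to uniform estimates for $\Gamma$ on bounded intervals, which are completely standard.
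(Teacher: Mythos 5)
Your proof is correct. Note that the paper itself gives no argument for this lemma -- it is quoted from \cite{AB3} -- so there is nothing internal to compare against; what you have written is a complete, self-contained substitute for the citation. The moment computations are right: Lemma \ref{LemmagS} with $\alpha=2(p-1)$ and $\alpha=4(p-1)$ gives $\E Y_1^2\asymp 1/p$ and $\E Y_1^4\asymp 1/p$ since the Gamma arguments stay in $[2,3]$ and $[2,5]$ respectively, Jensen gives the upper bound (which in fact needs no restriction on $p$), and the expansion $\E S_n^4=n\,\E Y_1^4+3n(n-1)\sigma^4$ is valid because the $Y_i$ are independent and symmetric. You correctly identify that the hypothesis $p\le n$ is exactly what makes $\E S_n^4\lesssim(\E S_n^2)^2$, after which Paley--Zygmund (or equivalently the H\"older bound $\E|S_n|\ge(\E S_n^2)^{3/2}/(\E S_n^4)^{1/2}$) delivers the matching lower bound. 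This is the standard route and, up to cosmetic differences, the argument in the cited reference.
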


We have the following result regarding the sign of $f(\xi_1,\xi_2)-f(\overline{\xi}_1,\overline{\xi}_2)$, which shall give Theorem \ref{thm:Diagonal} as a consequence.

\begin{proposition}\label{prop:PropositionSignDiagonal}
Let $X$ be a random vector uniformly distributed on $P_{\theta_0}B_{p}^n$ and let $f: S_{\theta_0^\perp}\times S_{\theta_0^\perp}\to\R$ given by
$$
f(\eta_1,\eta_2)=\E\langle X,\eta_1\rangle^2\langle X,\eta_2\rangle^2-\E\langle X,\eta_1\rangle^2\E\langle X,\eta_2\rangle^2.
$$
Write $\xi_1=\frac{e_1-e_2+e_3-e_4}{2}$, $\xi_2=\frac{e_1-e_2-e_3+e_4}{2}$, $\overline{\xi}_1=\frac{e_1-e_2}{\sqrt{2}}$, $\overline{\xi}_2=\frac{e_3-e_4}{\sqrt{2}}\in S_{\theta_0^\perp}$.
For every $p>2$ there exists $n_0(p)\in\N$ such that if $n\geq n_0$,
$$
f(\xi_1,\xi_2)-f(\overline{\xi}_1,\overline{\xi}_2)\leq 0
$$
and for every $1\leq p\leq 2$ there exists $n_1(p)\in\N$ such that if $n\geq n_1$,
$$
f(\xi_1,\xi_2)-f(\overline{\xi}_1,\overline{\xi}_2)\geq 0.
$$
\end{proposition}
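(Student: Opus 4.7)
The starting point is an identity already implicit in the proof of Proposition~\ref{prop:DecompositionOfFDiagonal}: since by Lemma~\ref{lem:ExpectationScalarProductSquared} the quantity $\E\langle X,\eta\rangle^2$ is constant on $S_{\theta_0^{\perp}}$, one has
\begin{equation*}
f(\xi_1,\xi_2) - f(\overline{\xi}_1,\overline{\xi}_2) \;=\; \frac{\E S^{p-1}}{4\,\E S^{p+3}\,\E\psi_{\theta_0}}\;\E\bigl[h\,\psi_{\theta_0}\bigr],
\end{equation*}
where $h := g_1^4 - 4g_1^3 g_2 - 3g_1^2 g_2^2 + 12\,g_1^2 g_2 g_3 - 6\,g_1 g_2 g_3 g_4$ is exactly the polynomial extracted there. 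The prefactor is strictly positive, so the sign we seek is that of $\E[h\,\psi_{\theta_0}]$. Since $g$ is symmetric, every odd moment vanishes and
\begin{equation*}
\E h \;=\; \E g_1^4 - 3(\E g_1^2)^2 \;=\; \frac{\Gamma\left(\frac{3}{p}\right)^2}{\Gamma\left(\frac{1}{p}\right)^2}\left(F\left(\frac{1}{p}\right) - 3\right),
\end{equation*}
which by Lemma~\ref{lem:GammasSecondTerm} is strictly negative for $p>2$, zero for $p=2$, and strictly positive for $1\le p<2$. These are precisely the signs predicted by the proposition.

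The plan is therefore to show that $\E[h\,\psi_{\theta_0}]$ inherits the sign of $\E h$ once $n$ is large. I would apply Lemma~\ref{lem:GeneralProbabilisticLemma} with $k=4$, $Y_4 = \sum_{i=1}^4\signum(g_i)|g_i|^{p-1}$ and $Z = \sum_{i=5}^n \signum(g_i)|g_i|^{p-1}$, which is symmetric and independent of $(g_1,\dots,g_4)$. Since $\sqrt{n}\,\psi_{\theta_0} = |Y_4+Z|$, the lemma yields
\begin{equation*}
\sqrt{n}\,\E[h\,\psi_{\theta_0}] \;=\; \E h\cdot \E|Z|\;+\;R,\qquad |R|\le \bigl(\E h^2\cdot \E Y_4^2\bigr)^{1/2} =: M(p),
\end{equation*}
where $M(p)$ is a constant independent of $n$ (all factors are moments of iid copies of $g$). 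Everything then reduces to proving a lower bound of the form $\E|Z|\ge c(p)\sqrt{n}$ for $n$ large, which I would extract from Lemma~\ref{lem:EstimatePsi} via the triangle inequality $\sqrt{n}\,\E\psi_{\theta_0}\le \E|Y_4|+\E|Z|$, combined with $\E\psi_{\theta_0}\ge c/\sqrt{p}$ and the $n$-independent bound $\E|Y_4|\le 4\,\E|g_1|^{p-1}$; a direct Paley--Zygmund argument on $Z$ (whose variance is $(n-4)\E|g_1|^{2p-2}$ and whose fourth moment is $O(n^2)$ with constants depending on $p$) would also do.

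For $p\ne 2$ the main term $\E h\cdot\E|Z|$ then grows like $\sqrt{n}$ with the correct sign and overpowers the bounded remainder $R$ as soon as $n\ge n_0(p)$ (respectively $n_1(p)$), yielding the claimed strict inequality. In the borderline case $p=2$ we have $\E h=0$, so $f(\xi_1,\xi_2) = f(\overline{\xi}_1,\overline{\xi}_2)$, compatible with the non-strict inequality in the statement. The main obstacle is precisely this quantitative comparison: the thresholds $n_0(p)$ and $n_1(p)$ are fixed by the trade-off between the $\sqrt{n}$-growth of $\E|Z|$ and the $n$-independent constant $M(p)$. Once Proposition~\ref{prop:PropositionSignDiagonal} is established, Theorem~\ref{thm:Diagonal} follows by combining it with Proposition~\ref{prop:DecompositionOfFDiagonal} and Lemma~\ref{lem:EstimateSumDiagonal}, in complete analogy with the derivation of Theorem~\ref{thm:Bpn} in Section~3.
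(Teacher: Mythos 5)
Your proposal is correct and follows essentially the same route as the paper: the same identity reducing the sign to that of $\E[h\,\psi_{\theta_0}]$, the same application of Lemma~\ref{lem:GeneralProbabilisticLemma} with $Y_4$ and $Z$, the same computation $\E h=\E g_1^4-3(\E g_1^2)^2$ combined with Lemma~\ref{lem:GammasSecondTerm}, and the same $\sqrt{n}$-versus-constant comparison (the paper obtains $\E|Z|\geq c\sqrt{n}/\sqrt{p}$ by applying Lemma~\ref{lem:EstimatePsi} directly to the $n-4$ remaining coordinates rather than via your triangle-inequality detour, but this is immaterial).
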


\begin{proof}

Since $f$ is constant for $p=2$, we will focus on the cases $p\ne 2$. We have seen in the proof of Proposition \ref{prop:DecompositionOfFDiagonal} that
$$
4(f(\xi_1,\xi_2)-f(\overline{\xi}_1,\overline{\xi}_2))=\frac{\E S^{p-1}}{\E S^{p+3}\E\psi_{\theta_0}}\E(g_1^4-4g_1^3g_2-3g_1^2g_2^2+12g_1^2g_2g_3-6g_1g_2g_3g_4)\psi_{\theta_0}
$$
and so, the sign of $f(\xi_1,\xi_2)-f(\overline{\xi}_1,\overline{\xi}_2)$ is equal to the sign of $\E(h\ \psi_{\theta_0})$   where $h(g_1,g_2,g_3,g_4)=g_1^4-4g_1^3g_2-3g_1^2g_2^2+12g_1^2g_2g_3-6g_1g_2g_3g_4.$

We apply then Lemma \ref{lem:GeneralProbabilisticLemma} to
$\displaystyle{Y_4=\sum_{i=1}^4\signum(g_i)|g_i|^{p-1}}$,
$\displaystyle{Z=\sum_{i=5}^n\signum(g_i)|g_i|^{p-1}}$ and $h$ as above and $$
\sqrt{n}\psi_{\theta_0}=\left|\sum_{i=1}^4\signum(g_i)|g_i|^{p-1}+\sum_{i=5}^n\signum(g_i)|g_i|^{p-1}\right|=|Y_4+Z|,
$$

On one hand,
$$\E h(g_1,\dots, g_4)\E|Z|= \frac{\Gamma\left(\frac{3}{p}\right)^2}{\Gamma\left(\frac{1}{p}
\right)^2}\left(F\left(\frac{1}{p}\right)-3\right)
\E\left|\sum_{i=5}^n|g_i|^{p-1}\signum (g_i)\right|$$
where $F(x)=\Gamma(5x)\Gamma(x)/\Gamma(3x)^2$
since, the $g_i's$ are i.i.d. symmetric random variables and by the computation in the proof of Theorem \ref{thm:Bpn}

$$\E h(g_1,g_2,g_3,g_4)=\E g_1^4-3(\E g_1^2)^2
=\frac{\Gamma\left(\frac{3}{p}\right)^2}{\Gamma\left(\frac{1}{p}
\right)^2}\left(F\left(\frac{1}{p}\right)-3\right)$$

Also, by Lemma \ref{lem:EstimatePsi} we have
$$\frac{c\sqrt n}{\sqrt p}\le \E\left|\sum_{i=5}^n|g_i|^{p-1}\signum (g_i)\right|\le\frac{C\sqrt n}{\sqrt p}$$ provided that $1\le p\le n$ for some absolute constants $c,C>0$.

On the other hand, by straightforward computations
$$\E |h(g_1,\dots, g_4)|^2=\E g_1^8+10\E g_1^6\E g_1^2+9(\E g_1^4)^2+144\E g_1^4(\E g_1^2)^2+36(\E g_1^2)^4$$  which, by Lemma \ref{LemmagS} is bounded by an absolute constant. Thus,
$$\left(\E |h(g_1,\dots, g_4)|^2\right)^{1/2}\E ( |Y_4|^2)^{1/2}\le C\left(\E|g_1|^{2p-2}\right)^\frac{1}{2}\le
\frac{C'}{\sqrt p}
$$
again by Lemma \ref{LemmagS} for some absolute constant $C'>0$.

Recall that factor $F\left(\frac{1}{p}\right)-3$ is positive for $1\le p<2$ and negative for $p>2$. We put all estimates together and obtain for some absolute constants $c_1,c_2,C_1,C_2>0$:\bigskip

Let $p>2$, then for  $n\ge n_0(p)$
$$\sqrt{n}\ \E\ h(g_1,g_2,g_3,g_4)\psi_{\theta_0}\leq\frac{ C_1\sqrt n\left(\frac{\Gamma\left(\frac{3}{p}\right)^2}{\Gamma\left(\frac{1}{p}\right)^2}\left(F\left(\frac{1}{p}\right)-3\right)\right)+C_2}{\sqrt{ p}}<0
$$

Let $1\leq p<2$, then  for  $n\ge n_1(p)$
 $$
\sqrt{n}\E h(g_1,g_2,g_3,g_4)\psi_{\theta_0}\geq\frac{ c_1\sqrt n\left(\frac{\Gamma\left(\frac{3}{p}\right)^2}{\Gamma\left(\frac{1}{p}\right)^2}\left(F\left(\frac{1}{p}\right)-3\right)\right)-c_2}{\sqrt{ p}}>0
$$\end{proof}

\begin{proof}[Proof of Theorem 1.2]

Proceed as in the proof of Theorem \ref{thm:Bpn}, using Lemma 4.3 and Proposition \ref{prop:DecompositionOfFDiagonal}.
\end{proof}

Finally, in order to deduce Corollary \ref{CorollaryDiagonal} we shall compute the sign of $f(\overline{\xi}_1,\overline{\xi}_2)$ for $p\ge 2$ and $f({\xi}_1,{\xi}_2)$ for $1\le p\le 2$. For that matter, we denote $ \overline{g}_1,\dots\overline{g}_n$ i.i.d. copies of
$g$ and the $g_i's$, and ${\overline{\psi}_{\theta_0}=\left|\frac{1}{\sqrt n}\sum_{i=1}^n\signum(\overline{g}_i)|\overline{g}_i|^{p-1}\right|}$.

\begin{lemma}\label{lem:MaximizingPairDiagonal}

Let $X$ a be random vector uniformly distributed on $P_{\theta_0^{\perp}}B_{p}^n$, $p\ge 1$ and $f: S_{\theta_0^\perp}\times S_{\theta_0^\perp}\to\R$ given by
$$
f(\eta_1,\eta_2)=\E\langle X,\eta_1\rangle^2\langle X,\eta_2\rangle^2-\E\langle X,\eta_1\rangle^2\E\langle X,\eta_2\rangle^2.
$$
Write $\overline{\xi}_1=\frac{e_1-e_2}{\sqrt{2}}$, $\overline{\xi}_2=\frac{e_3-e_4}{\sqrt{2}}\in S_{\theta_0^\perp}$. Then
$$
f(\overline{\xi}_1,\overline{\xi}_2)=\frac{\E S^{p-1}\E h(g_1,g_2,g_3,g_4,\overline{g}_1,\overline{g}_2)\psi_{\theta_0}\overline{\psi}_{\theta_0}}{\E S^{p+3}\E \psi_{\theta_0}\overline{\psi}_{\theta_0}},
$$
where $h:\R^6\to\R$ is defined by
\begin{eqnarray*}
h(x_1,\dots,x_6)&=& x_1x_2^2-2x_1^2x_2x_3+x_1x_2x_3x_4\cr &-&\frac{\E S^{p-1}\E S^{p+3}}{(\E S^{p+1})^2}(x_1^2x_5^2-2x_1x_2x_5^2+x_1x_2x_5x_6)\cr
\end{eqnarray*}
\end{lemma}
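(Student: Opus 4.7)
\medskip\noindent\emph{Proof plan.} The strategy mirrors that of Proposition \ref{prop:DecompositionOfFDiagonal}, with the extra twist of introducing an independent copy of $G$ in order to turn the product of two single-variable moments into a single joint expectation. The key observation is that, after applying Lemma \ref{lem:ProbabilisticRepresentation}, the denominator $\E\psi_{\theta_0}$ appears \emph{linearly} in the joint moment $\E\langle X,\overline{\xi}_1\rangle^2\langle X,\overline{\xi}_2\rangle^2$ but \emph{squared} in the product $\E\langle X,\overline{\xi}_1\rangle^2\cdot\E\langle X,\overline{\xi}_2\rangle^2$; introducing $\overline{G}=(\overline{g}_1,\dots,\overline{g}_n)$, an independent copy of $G$, with the associated $\overline{\psi}_{\theta_0}$, lets one identify $(\E\psi_{\theta_0})^2=\E[\psi_{\theta_0}\overline{\psi}_{\theta_0}]$ and
$$
\E[\langle G,\overline{\xi}_1\rangle^2\psi_{\theta_0}]\cdot\E[\langle G,\overline{\xi}_2\rangle^2\psi_{\theta_0}]=\E[\langle G,\overline{\xi}_1\rangle^2\langle\overline{G},\overline{\xi}_2\rangle^2\psi_{\theta_0}\overline{\psi}_{\theta_0}],
$$
by independence and identical distribution of $G$ and $\overline{G}$.

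Concretely, Lemma \ref{lem:ProbabilisticRepresentation} (with $\theta=\theta_0$) applied to $\langle y,\overline{\xi}_1\rangle^2\langle y,\overline{\xi}_2\rangle^2$ and to each $\langle y,\overline{\xi}_j\rangle^2$ gives
$$
\E\langle X,\overline{\xi}_1\rangle^2\langle X,\overline{\xi}_2\rangle^2=\frac{\E S^{p-1}}{\E S^{p+3}}\,\frac{\E\langle G,\overline{\xi}_1\rangle^2\langle G,\overline{\xi}_2\rangle^2\psi_{\theta_0}}{\E\psi_{\theta_0}},\qquad \E\langle X,\overline{\xi}_j\rangle^2=\frac{\E S^{p-1}}{\E S^{p+1}}\,\frac{\E\langle G,\overline{\xi}_j\rangle^2\psi_{\theta_0}}{\E\psi_{\theta_0}}.
$$
Multiplying the two single-moment identities, multiplying numerator and denominator of the joint-moment identity by $\E\overline{\psi}_{\theta_0}$ to make $\E[\psi_{\theta_0}\overline{\psi}_{\theta_0}]$ appear, applying the independent-copy identity above, and subtracting then yields
$$
f(\overline{\xi}_1,\overline{\xi}_2)=\frac{\E S^{p-1}}{\E S^{p+3}\,\E[\psi_{\theta_0}\overline{\psi}_{\theta_0}]}\,\E\Big[\psi_{\theta_0}\overline{\psi}_{\theta_0}\Big(\langle G,\overline{\xi}_1\rangle^2\langle G,\overline{\xi}_2\rangle^2-\frac{\E S^{p-1}\,\E S^{p+3}}{(\E S^{p+1})^2}\langle G,\overline{\xi}_1\rangle^2\langle\overline{G},\overline{\xi}_2\rangle^2\Big)\Big].
$$

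The final step is to identify the bracket with $h(g_1,g_2,g_3,g_4,\overline{g}_1,\overline{g}_2)$. Using $\overline{\xi}_1=(e_1-e_2)/\sqrt2$, $\overline{\xi}_2=(e_3-e_4)/\sqrt2$, the two quartic forms equal $\tfrac14(g_1-g_2)^2(g_3-g_4)^2$ and $\tfrac14(g_1-g_2)^2(\overline{g}_3-\overline{g}_4)^2$. Each expands into nine monomials which one collapses to a single canonical representative via two symmetries: (i) $\psi_{\theta_0}$ (resp.\ $\overline{\psi}_{\theta_0}$) is invariant under any permutation of the $g$-indices (resp.\ $\overline{g}$-indices), so all monomials of a given index pattern $g_i^2g_j^2$, $g_i^2g_jg_k$, $g_ig_jg_kg_l$ have equal expectation against $\psi_{\theta_0}\overline{\psi}_{\theta_0}$; (ii) in the mixed expansion, by identical distribution of $G$ and $\overline{G}$, the two patterns $g_i^2\overline{g}_j\overline{g}_k$ and $g_ig_j\overline{g}_k^2$ both factor as $\E[g_1^2\psi_{\theta_0}]\cdot\E[g_1g_2\overline{\psi}_{\theta_0}]$, so they are equal in expectation and may share a single representative. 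Collecting yields precisely the polynomial $h$ in the statement.

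The main obstacle is the combinatorial bookkeeping in these reductions—in particular, tracking which permutations leave $\psi_{\theta_0}$ (resp.\ $\overline{\psi}_{\theta_0}$) invariant and which further identifications are justified by the independence/identical distribution of $G$ and $\overline{G}$. Once this correspondence is set up, the passage from the nine-term expansions to the three-monomial form of $h$ is routine.
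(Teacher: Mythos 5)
Your argument is correct and follows essentially the same route as the paper: Lemma \ref{lem:ProbabilisticRepresentation} applied to both moments, the independent copy $\overline{G}$ used to write $(\E\psi_{\theta_0})^2=\E\psi_{\theta_0}\overline{\psi}_{\theta_0}$ and to turn the product of single moments into one joint expectation, and the permutation/independence symmetries to collapse the expansions of $\tfrac14(g_1-g_2)^2(g_3-g_4)^2$ and $\tfrac14(g_1-g_2)^2(\overline{g}_3-\overline{g}_4)^2$ to the representative monomials of $h$ (the paper simply cites these computations from Proposition \ref{prop:DecompositionOfFDiagonal} and Lemma \ref{lem:ExpectationScalarProductSquared} rather than redoing them). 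Note only that your collection correctly produces $x_1^2x_2^2$ as the first monomial of $h$, whereas the statement prints $x_1x_2^2$ --- a typo in the paper that your derivation in fact fixes.
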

\begin{proof}
We have seen in the proof of Proposition \ref{prop:DecompositionOfFDiagonal} that
\begin{eqnarray*}
\E\langle X,\overline{\xi}_1\rangle^2\langle X,\overline{\xi}_2\rangle^2&=&\frac{\E S^{p-1}\E(g_1g_2^2-2g_1^2g_2g_3+g_1g_2g_3g_4)\psi_{\theta_0}}{\E S^{p+3}\E \psi_{\theta_0}}\cr
&=&\frac{\E S^{p-1}\E(g_1g_2^2-2g_1^2g_2g_3+g_1g_2g_3g_4)\psi_{\theta_0}\overline{\psi}_{\theta_0}}{\E S^{p+3}\E \psi_{\theta_0}\overline{\psi}_{\theta_0}}.\cr
\end{eqnarray*}
and in the proof of Lemma \ref{lem:ExpectationScalarProductSquared},
$$
\E\langle X,\overline{\xi}_1\rangle^2\E\langle X,\overline{\xi}_2\rangle^2=\frac{(\E S^{p-1})^2\E (g_1^2-g_1g_2)(\overline{g}_1^2-\overline{g}_1\overline{g}_2)\psi_{\theta_0}\overline{\psi}_{\theta_0}}{(\E S^{p+1})^2\E \psi_{\theta_0}\overline{\psi}_{\theta_0}}.
$$
Since $
\E g_1g_2\overline{g}_1^2\psi_{\theta_0}\overline{\psi}_{\theta_0}=\E\overline{g}_1\overline{g}_2g_1^2\psi_{\theta_0}\overline{\psi}_{\theta_0},
$
we obtain the result.
\end{proof}

Therefore, the sign of $f(\overline{\xi}_1,\overline{\xi}_2)$ coincides with the sign of $\E h\psi_{\theta_0}\overline{\psi}_{\theta_0}$. We split the latter quantity in four terms by using Lemma \ref{lem:GeneralProbabilisticLemma}.

\begin{lemma} Denote $\displaystyle{\overline{Y}_2=\sum_{i=1}^2|\overline{g}_i|^{p-1}\signum(\overline{g}_i)}, {Y_4=\sum_{i=1}^4|g_i|^{p-1}\signum(g_i)}$ and  $\displaystyle \overline{Z}=\sum_{i=3}^n|\overline{g}_i|^{p-1}\signum(\overline{g}_i), {Z=\sum_{i=5}^n|g_i|^{p-1}\signum(g_i)}$.
	Then,
\begin{align*}
&n\ \E h(g_1,g_2,g_3,g_4,\overline{g}_1,\overline{g}_2)\psi_{\theta_0}\overline{\psi}_{\theta_0}=\E h(g_1,g_2,g_3,g_4,\overline{g}_1,\overline{g}_2)\E\left|Z\right|\E\left|\overline{Z}\right|\cr
&+\E\left|\overline{Z}\right|\E h(g_1,g_2,g_3,g_4,\overline{g}_1,\overline{g}_2)
\left(\left|Y_4\right|-\left|Z\right|\right)\chi_{\{\left|Y_4\right|\geq\left|Z\right|\}}\cr
&+\E\left|Z\right|\E h(g_1,g_2,g_3,g_4,\overline{g}_1,\overline{g}_2)
\left(\left|\overline{Y_2}\right|-\left|\overline{Z}\right|\right)\chi_{\{\left|\overline{Y_2}\right|\geq\left|\overline{Z}\right|\}}\cr
&+\E h(g_1,g_2,g_3,g_4,\overline{g}_1,\overline{g}_2)\left(\left|Y_4\right|-\left|Z\right|\right)\chi_{\{\left|Y_4\right|\geq\left|Z\right|\}}\left(\left|\overline{Y_2}\right|-\left|\overline{Z}\right|\right)\chi_{\{\left|\overline{Y_2}\right|\geq\left|\overline{Z}\right|\}}.
\end{align*}
\end{lemma}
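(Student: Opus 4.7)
The plan is to iterate Lemma 4.6 twice, once over the $g$-variables and once over the $\overline g$-variables. Using the definitions of the quantities involved we have $\sqrt n\,\psi_{\theta_0}=|Y_4+Z|$ and $\sqrt n\,\overline\psi_{\theta_0}=|\overline Y_2+\overline Z|$, so the claim amounts to an algebraic decomposition of
$$\E\, h(g_1,\ldots,g_4,\overline g_1,\overline g_2)\cdot|Y_4+Z|\cdot|\overline Y_2+\overline Z|.$$
The structural facts that make Lemma 4.6 applicable are: the families $(g_i)_{i=1}^n$ and $(\overline g_i)_{i=1}^n$ are mutually independent; $Z$ is a sum of symmetric i.i.d.\ random variables independent of $(g_1,\ldots,g_4)$, and analogously $\overline Z$ is symmetric and independent of $(\overline g_1,\overline g_2)$.

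First I would fix the $\overline g$-variables and integrate over the $g_i$'s, treating $H(g_1,\ldots,g_4):=h(g_1,\ldots,g_4,\overline g_1,\overline g_2)$ as the test function of Lemma 4.6 with $k=4$. This yields
$$\E_{g}H\cdot|Y_4+Z|=\E_{g}H\cdot\E|Z|+\E_{g}H(|Y_4|-|Z|)\chi_{\{|Y_4|\ge|Z|\}}.$$
The factor $|\overline Y_2+\overline Z|$ is constant in the $g$'s, so multiplying it in and then taking expectation in the $\overline g$'s (by Fubini) decomposes the original expectation as a sum of two terms, each of which still carries a factor $|\overline Y_2+\overline Z|$ inside the $\overline g$-expectation.

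Next, in each of those two terms the remaining factor depending on the $\overline g$'s is the product of $|\overline Y_2+\overline Z|$ with a function of $(\overline g_1,\overline g_2)$ only (because the $g$-dependence has been integrated out). I would therefore apply Lemma 4.6 a second time, now with $k=2$ in the $\overline g$-variables, to each of the two terms separately. Each application splits one term into two, producing exactly four summands. Identifying the constants $\E|Z|\,\E|\overline Z|$, $\E|\overline Z|$, $\E|Z|$ and $1$ as the outer factors of the four pieces, and using Fubini once more to recombine the $g$- and $\overline g$-expectations into joint expectations of $h$ against $1$, $(|Y_4|-|Z|)\chi$, $(|\overline Y_2|-|\overline Z|)\chi$ and their product, matches the four summands in the statement.

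The main obstacle is purely organizational: keeping careful track of which variables are being integrated at each stage and verifying that the symmetry/independence hypotheses of Lemma 4.6 hold for the chosen $(h,Y_k,Z)$-assignment at both iterations. No new analytic ingredient beyond Lemma 4.6 and Fubini is needed.
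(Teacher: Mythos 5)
Your proposal is correct and follows essentially the same route as the paper: two successive applications of Lemma 4.6 combined with Fubini's theorem, exploiting the independence of the $g$- and $\overline g$-families; the only (immaterial) difference is that the paper conditions on the $g$'s and splits with $(\overline Y_2,\overline Z)$ first, whereas you condition on the $\overline g$'s and split with $(Y_4,Z)$ first.
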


\begin{proof}
First condition on the random variables $g_1,\dots g_n$ and apply Lemma \ref{lem:GeneralProbabilisticLemma} with $\overline{Y_2}$ and $\overline{Z}$.
Then take expectations with respect to  $g_1,\dots g_n$, use Fubini's theorem and, conditioning on  $\overline{g}_1,\dots \overline{g}_n$, apply again Lemma  with
$Y_4$ and $Z$.
\end{proof}

\begin{lemma} Let $p\ge 1$ and $\overline{\xi}_1=\frac{e_1-e_2}{\sqrt{2}}$, $\overline{\xi}_2=\frac{e_3-e_4}{\sqrt{2}}\in S_{\theta_0^\perp}$.  For every $n\ge n_0(p)$ for some $n_0(p)\in\N$,
	$$f(\overline{\xi}_1,\overline{\xi}_2)<0$$

\end{lemma}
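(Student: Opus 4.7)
The plan is to use Lemma \ref{lem:MaximizingPairDiagonal}, which expresses $f(\overline{\xi}_1,\overline{\xi}_2)$ as a positive prefactor times $\E h(g_1,\dots,g_4,\overline{g}_1,\overline{g}_2)\psi_{\theta_0}\overline{\psi}_{\theta_0}$, thereby reducing the statement to showing that this expectation is strictly negative for $n$ sufficiently large. I would then apply the preceding decomposition lemma to write $n\,\E h\,\psi_{\theta_0}\overline{\psi}_{\theta_0}=M+E_1+E_2+E_3$, where $M=\E h\cdot\E|Z|\cdot\E|\overline{Z}|$ is the main term and $E_1,E_2,E_3$ are the three error terms, and determine the asymptotics of each as $n\to\infty$ with $p$ fixed.

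For the main term, symmetry and independence of the $g_i,\overline{g}_j$ reduce $\E h$ to $(\E g_1^2)^2(1-K)$ with $K=\E S^{p-1}\E S^{p+3}/(\E S^{p+1})^2$. Log-convexity of $\log\Gamma$ (exactly as in the proof of Corollary 1.1) together with the algebraic identity $(n+p+1)^2=(n+p-1)(n+p+3)+4$ yields $K>1$, so $\E h<0$; a Taylor expansion of $\log\Gamma$ about the midpoint $1+(n+p+1)/p$ combined with Lemma \ref{LemmagS} gives the precise rate $n(K-1)\to 4/p$. Combined with the central-limit type asymptotic $\E|Z|/\sqrt n\to\sqrt{2\sigma^2/\pi}$, $\sigma^2:=\E|g|^{2p-2}$, and analogously for $|\overline{Z}|$, this produces $M\to -8(\E g_1^2)^2\sigma^2/(p\pi)$, a strictly negative constant.

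The error terms are controlled via a local CLT type estimate: since $|Z|/\sqrt n$ has a uniformly bounded density at the origin, $\E_Z(|Y_4|-|Z|)_+=\int_0^{|Y_4|}\Pro(|Z|\le t)\,dt\le C|Y_4|^2/\sqrt n$, and analogously for $\overline{Z}$. Combined with independence and Cauchy--Schwarz this gives $|E_3|\le C/\sqrt n\to 0$, but only $|E_1|,|E_2|\le C_p=O(1)$, of the same order as $M$. The crucial point, and the main obstacle in the argument, is therefore the cancellation between $E_1$ and $E_2$: using the refined asymptotic $\sqrt n\,\E_Z(|Y_4|-|Z|)_+\to|Y_4|^2/\sqrt{2\pi\sigma^2}$ one obtains $E_1\to\pi^{-1}\E h\,Y_4^2$ and $E_2\to\pi^{-1}\E h\,\overline Y_2^2$, and a direct moment computation (splitting $h=h_g+h_{g,\overline g}$ and using independence of the $\overline{g}_j$ from $Y_4$ and of the $g_i$ from $\overline Y_2$) yields at $K=1$ the identities $\E h\,Y_4^2=\E g_1^2\bigl(\E g_1^{2p}-\E g_1^2\,\E g_1^{2p-2}\bigr)=-\E h\,\overline Y_2^2$, so that $E_1+E_2\to 0$; the $O(1/n)$ remainders from $K\ne 1$ and from the next term in the density expansion contribute only vanishing extras.

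Assembling the three pieces, $n\,\E h\,\psi_{\theta_0}\overline{\psi}_{\theta_0}\to -8(\E g_1^2)^2\sigma^2/(p\pi)<0$, which provides the required $n_0(p)\in\N$ beyond which the expression, and hence $f(\overline{\xi}_1,\overline{\xi}_2)$, is strictly negative. The hard part is clearly the exact cancellation of $E_1$ and $E_2$; without it the naive comparison of constants would be inconclusive, since each of these errors is individually of the same magnitude as the main term $M$.
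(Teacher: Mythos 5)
Your proposal uses the same two structural ingredients as the paper's proof --- the representation of $f(\overline{\xi}_1,\overline{\xi}_2)$ from Lemma \ref{lem:MaximizingPairDiagonal} and the four-term splitting $n\,\E h\,\psi_{\theta_0}\overline{\psi}_{\theta_0}=M+E_1+E_2+E_3$ --- but from that point on the two arguments diverge genuinely, and yours is the one that actually closes. The paper asserts $\E h=-\tfrac{\E S^{p-1}\E S^{p+3}}{(\E S^{p+1})^2}(\E g_1^2)^2<-c$, bounded away from zero, so that $M$ is of order $-n/p$ and dominates the Cauchy--Schwarz bounds $O(\sqrt n/p)$ on $E_1,E_2$. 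That value of $\E h$ comes from reading the term $x_1x_2^2$ in the displayed definition of $h$ literally, but this is a typo for $x_1^2x_2^2$: the quantity being represented is $\E(g_1^2g_2^2-2g_1^2g_2g_3+g_1g_2g_3g_4)\psi_{\theta_0}=\E\langle G,\overline{\xi}_1\rangle^2\langle G,\overline{\xi}_2\rangle^2\psi_{\theta_0}$ (cf.\ the proof of Proposition \ref{prop:DecompositionOfFDiagonal}). With the correct $h$ one gets exactly your value $\E h=(1-K)(\E g_1^2)^2$ with $K=1+\tfrac{4}{pn}+o(\tfrac1n)$, so $M=O(1)$ and the paper's $O(\sqrt n)$ error bounds no longer suffice; one is forced into precisely the two extra steps you propose, namely the anticoncentration/local-CLT bound that brings $E_1,E_2$ down to $O(1)$ and the cancellation $E_1+E_2\to0$. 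I have verified your moment identity: at $K=1$, $\E h\,Y_4^2=-\E h\,\overline{Y}_2^2=\E g_1^2\bigl(\E|g_1|^{2p}-\E g_1^2\,\E|g_1|^{2p-2}\bigr)$. Your limit $n\,\E h\,\psi_{\theta_0}\overline{\psi}_{\theta_0}\to-8(\E g_1^2)^2\sigma^2/(p\pi)$ is also confirmed by the exactly computable case $p=2$, where $P_{\theta_0^\perp}B_2^n$ is a Euclidean ball and $f(\overline{\xi}_1,\overline{\xi}_2)=\tfrac{1}{(n+1)(n+3)}-\tfrac{1}{(n+1)^2}$, giving the limit $-\tfrac{1}{2\pi}$ in agreement with your formula and not with a bound of order $-C_1n$. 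At $p=2$ one moreover finds $E_1\to\tfrac{1}{4\pi}$, $E_2\to-\tfrac{1}{4\pi}$, $M\to-\tfrac{1}{2\pi}$, so absolute-value bounds on $E_1,E_2$ alone would yield exactly $0$: the cancellation is indispensable, not a refinement, just as you say.

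What your sketch still owes to be a complete proof: a concentration-function estimate and a local limit theorem for $Z=\sum_{i\geq 5}\signum(g_i)|g_i|^{p-1}$ uniform enough to justify both the upper bound $\E_Z(|Y_4|-|Z|)\chi_{\{|Y_4|\geq|Z|\}}\leq CY_4^2/\sqrt n$ and the limit $\sqrt n\,\E_Z(|Y_4|-|Z|)\chi_{\{|Y_4|\geq|Z|\}}\to Y_4^2/\sqrt{2\pi\sigma^2}$, treating separately the lattice case $p=1$ where the summands are $\pm1$; and the bookkeeping showing that the $K\neq1$ correction to $\E h\,Y_4^2+\E h\,\overline{Y}_2^2$ and the next-order density terms contribute $o(1)$. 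Both are routine but should be written out.
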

\begin{proof}

We proceed as in the proof of Proposition \ref{prop:PropositionSignDiagonal}:

By Lemma \ref{lem:MaximizingPairDiagonal}, we will compute the sign of $n \E h(g_1,g_2,g_3,g_4,\overline{g}_1,\overline{g}_2)\psi_{\theta_0}\overline{\psi}_{\theta_0}$. For that matter we apply Lemma 4.7 and estimate each summand.
$$ \E\ h=
-\frac{\E S^{p-1}\E S^{p+3}}{(\E S^{p+1})^2}(\E g_1^2)^2=-\frac{\Gamma\left(1+\frac{n-1}{p}\right)\Gamma\left(1+\frac{n+3}{p}\right)\Gamma\left(\frac{3}{p}\right)^2}{\Gamma\left(1+\frac{n+1}{p}\right)^2\Gamma\left(\frac{1}{p}\right)^2}<-c$$
by definition of $h$ and Lemma \ref{LemmagS}, for some absolute constant $c>0$. Also, $\E h^2(g_1,g_2,g_3,g_4,\overline{g}_1,\overline{g}_2)\le C.$ In the sequel we shall use the same letter $c,C...$ to denote possibly different values of an absolute constant $c,C...>0$.

According to Lemma \ref{lem:GeneralProbabilisticLemma} and Lemma \ref{LemmagS}, the second summand has absolute value bounded by
$$\E|\overline{Z}|(\E h^2)^{1/2}(\E |\overline{Y_2}|^2)^{1/2}\le \frac{C}{\sqrt p}\E|\overline{Z}|$$
and in the same way, the third summand has absolute value bounded by $$\E|{Z}|(\E h^2)^{1/2}(\E |Y_4|^2)^{1/2}\le \frac{C}{\sqrt p}\E|{Z}|$$
Similarly, the forth summand has absolute value bounded by $\frac{C}{p}$ and finally, Lemma \ref{lem:EstimatePsi} implies
$\displaystyle
c\frac{\sqrt{n}}{\sqrt{p}}\le\E\left|Z\right|, \E\left|\overline{Z}\right|\le C\frac{\sqrt{n}}{\sqrt{p}}$ whenever $p\leq n$.

We put all estimates together and conclude that for $1\leq p\leq n$ and some absolute constants:
	$$
	\frac{np\E S^{p+3}\E \psi_{\theta_0}\overline{\psi}_{\theta_0}}{\E S^{p-1}}f(\overline{\xi}_1,\overline{\xi}_2)\leq-C_1n+C_2\sqrt{n}+C_3\sqrt{n}+C_4.
	$$
	
 The result now easily follows.
	\end{proof}

In the following lemma we compute the value of $f(\xi_1,\xi_2)$.

\begin{lemma}\label{lem:MaximizingPairDiagonalp<2}

Let $X$ be a random vector uniformly distributed on $P_{\theta_0^\perp}B_{p}^n$, $p\geq 1$, and $f: S_{\theta_0^\perp}\times S_{\theta_0^\perp}\to\R$ given by
$$
f(\eta_1,\eta_2)=\E\langle X,\eta_1\rangle^2\langle X,\eta_2\rangle^2-\E\langle X,\eta_1\rangle^2\E\langle X,\eta_2\rangle^2.
$$
Write $\xi_1=\frac{e_1-e_2+e_3-e_4}{2}$, $\xi_2=\frac{e_1-e_2-e_3+e_4}{2}$.  Then
$$
f(\xi_1,\xi_2)=\frac{\E S^{p-1}\E h(g_1,g_2,g_3,g_4,\overline{g}_1,\overline{g}_2)\psi_{\theta_0}\overline{\psi}_{\theta_0}}{\E S^{p+3}\E \psi_{\theta_0}\overline{\psi}_{\theta_0}},
$$
where $h:\R^6\to\R$ is defined by
\begin{eqnarray*}
h(x_1,x_2,x_3,x_4,x_5,x_6)&=&\frac{1}{4}x_1^4-x_1^3x_2-\frac{3}{4}x_1^2x_2^2+x_1^2x_2x_3-\frac{1}{2}x_1x_2x_3x_4+x_1x_2^2\cr
&-&\frac{\E S^{p-1}\E S^{p+3}}{(\E S^{p+1})^2}(x_1^2x_5^2-2x_1x_2x_5^2+x_1x_2x_5x_6).
\end{eqnarray*}
\end{lemma}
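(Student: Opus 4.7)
My plan is to mirror the proof of Lemma \ref{lem:MaximizingPairDiagonal}, substituting $\xi_1,\xi_2$ for $\overline{\xi}_1,\overline{\xi}_2$. Applying Lemma \ref{lem:ProbabilisticRepresentation} with $\theta=\theta_0$ to the function $\langle x,\xi_1\rangle^2\langle x,\xi_2\rangle^2$ and using the independence of $G/S$ and $S$, I get
$$
\E\langle X,\xi_1\rangle^2\langle X,\xi_2\rangle^2 = \frac{\E S^{p-1}\;\E\langle G,\xi_1\rangle^2\langle G,\xi_2\rangle^2\psi_{\theta_0}}{\E S^{p+3}\;\E\psi_{\theta_0}}.
$$
Since $\xi_1,\xi_2\in S_{\theta_0^\perp}$, Lemma \ref{lem:ExpectationScalarProductSquared} gives $\E\langle X,\xi_j\rangle^2 = \frac{\E S^{p-1}\,\E(g_1^2-g_1g_2)\psi_{\theta_0}}{\E S^{p+1}\,\E\psi_{\theta_0}}$ for $j=1,2$, so the product $\E\langle X,\xi_1\rangle^2\,\E\langle X,\xi_2\rangle^2$ is the square of this quantity.

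Next, I introduce an independent copy $\overline{g}_1,\dots,\overline{g}_n$ of the $g_i$ with its associated $\overline{\psi}_{\theta_0}$. By independence of $G$ and $\overline{G}$,
$$
[\E(g_1^2-g_1g_2)\psi_{\theta_0}]^2 = \E(g_1^2-g_1g_2)(\overline{g}_1^2-\overline{g}_1\overline{g}_2)\psi_{\theta_0}\overline{\psi}_{\theta_0}, \qquad (\E\psi_{\theta_0})^2=\E\psi_{\theta_0}\overline{\psi}_{\theta_0},
$$
and the first identity above can be multiplied by $\E\overline{\psi}_{\theta_0}/\E\overline{\psi}_{\theta_0}=1$ to introduce $\overline{\psi}_{\theta_0}$ in both numerator and denominator. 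Forming the difference $f(\xi_1,\xi_2)$ and factoring out the common $\frac{\E S^{p-1}}{\E S^{p+3}\,\E\psi_{\theta_0}\overline{\psi}_{\theta_0}}$ leaves a single expectation $\E H\,\psi_{\theta_0}\overline{\psi}_{\theta_0}$ with
$$
H = \langle G,\xi_1\rangle^2\langle G,\xi_2\rangle^2 - \frac{\E S^{p-1}\,\E S^{p+3}}{(\E S^{p+1})^2}(g_1^2-g_1g_2)(\overline{g}_1^2-\overline{g}_1\overline{g}_2).
$$

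The remaining task is to expand $H$ and identify it with the stated polynomial $h$. The identity $\langle G,\xi_1\rangle\langle G,\xi_2\rangle = \tfrac{1}{4}\bigl((g_1-g_2)^2-(g_3-g_4)^2\bigr)$ makes the expansion tractable, since $\langle G,\xi_1\rangle^2\langle G,\xi_2\rangle^2 = \tfrac{1}{16}\bigl((g_1-g_2)^2-(g_3-g_4)^2\bigr)^2$, and the invariance of the joint density of $(g_1,\dots,g_n)$ under permutations of indices (which preserves $\psi_{\theta_0}$) collapses the resulting degree-four monomials into the five symmetry classes represented by $g_1^4$, $g_1^3 g_2$, $g_1^2 g_2^2$, $g_1^2 g_2 g_3$, $g_1 g_2 g_3 g_4$. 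In the second block of $H$, the swap $g\leftrightarrow\overline{g}$ preserves $\psi_{\theta_0}\overline{\psi}_{\theta_0}$ and identifies $\E g_1^2\overline{g}_1\overline{g}_2\psi_{\theta_0}\overline{\psi}_{\theta_0}$ with $\E g_1g_2\overline{g}_1^2\psi_{\theta_0}\overline{\psi}_{\theta_0}$, so the two mixed cross terms combine into the single coefficient $-2x_1x_2x_5^2$ appearing in $h$. The main obstacle is purely combinatorial: sorting the sixteen degree-four monomials produced by the expansion into their symmetry classes and checking that the resulting coefficients match those in the statement of $h$.
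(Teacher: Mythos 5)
Your argument is correct and coincides with the paper's own proof in every essential respect: the same probabilistic representation via Lemma \ref{lem:ProbabilisticRepresentation}, the same device of introducing the independent copies $\overline{g}_1,\dots,\overline{g}_n$ through $(\E\psi_{\theta_0})^2=\E\psi_{\theta_0}\overline{\psi}_{\theta_0}$, and the same symmetrization $\E g_1^2\overline{g}_1\overline{g}_2\psi_{\theta_0}\overline{\psi}_{\theta_0}=\E g_1g_2\overline{g}_1^2\psi_{\theta_0}\overline{\psi}_{\theta_0}$ --- the only cosmetic difference being that you expand $\tfrac{1}{16}\bigl((g_1-g_2)^2-(g_3-g_4)^2\bigr)^2$ directly rather than recycling the identity already established in the proof of Proposition \ref{prop:DecompositionOfFDiagonal}. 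One caveat for your final coefficient check: the expansion yields $+\tfrac14 x_1^2x_2^2$ and no degree-three monomial, so the term $x_1x_2^2$ in the stated $h$ must be read as $x_1^2x_2^2$ (a typo; compare the corresponding term $g_1^2g_2^2$ in the proof of Proposition \ref{prop:DecompositionOfFDiagonal}), after which $-\tfrac34x_1^2x_2^2+x_1^2x_2^2=+\tfrac14x_1^2x_2^2$ and your coefficients agree with the statement.
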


\begin{proof}
We have seen in the proof of Proposition \ref{prop:DecompositionOfFDiagonal} that
\begin{eqnarray*}
\E(g_1^4-4g_1^3g_2-3g_1^2g_2^2&+&12g_1^2g_2g_3-6g_1g_2g_3g_4)
\psi_{\theta_0}=\cr
&=&4\E\langle G,\xi_1\rangle^2\langle G,\xi_2\rangle^2\psi_{\theta_0}-4\E\langle G,\overline{\xi}_1\rangle^2\langle G,\overline{\xi}_2\rangle^2\psi_{\theta_0}
\end{eqnarray*}
and then, taking into account that
$$
\E\langle G,\overline{\xi}_1\rangle^2\langle G,\overline{\xi}_2\rangle^2\psi_{\theta_0}=\E(g_1g_2^2-2g_1^2g_2g_3+g_1g_2g_3g_4)\psi_{\theta_0}
$$
we obtain that
\begin{align*}
&\E\langle X,\xi_1\rangle^2\langle X,\xi_2\rangle^2=\frac{\E S^{p-1}}{\E S^{p+3}\E\psi_{\theta_0}}\E\langle G,\xi_1\rangle^2\langle G,\xi_2\rangle^2\psi_{\theta_0}\cr
&=\frac{\E S^{p-1}}{\E S^{p+3}\E\psi_{\theta_0}\overline{\psi}_{\theta_0}}\Big(\E(\frac{1}{4}g_1^4-g_1^3g_2-\frac{3}{4}g_1^2g_2^2+g_1^2g_2g_3-\frac{1}{2}g_1g_2g_3g_4+g_1g_2^2)\psi_{\theta_0}\overline{\psi}_{\theta_0}\Big)
\end{align*}
Since, by Lemma \ref{lem:ExpectationScalarProductSquared}
$$
\E\langle X,\xi_1\rangle^2\E\langle X,\xi_2\rangle^2=\frac{(\E S^{p-1})^2\E (g_1^2-g_1g_2)(\overline{g}_1^2-\overline{g}_1\overline{g}_2)\psi_{\theta_0}\overline{\psi}_{\theta_0}}{(\E S^{p+1})^2\E \psi_{\theta_0}\overline{\psi}_{\theta_0}}
$$
and
$\E g_1g_2\overline{g}_1^2\psi_{\theta_0}\overline{\psi}_{\theta_0}=\E\overline{g}_1\overline{g}_2g_1^2\psi_{\theta_0}\overline{\psi}_{\theta_0},
$
we obtain the result.
\end{proof}

\begin{lemma}
Let $\xi_1=\frac{e_1-e_2+e_3-e_4}{2}$, $\xi_2=\frac{e_1-e_2-e_3+e_4}{2}\in S_{\theta_0^\perp}$. There exists $n_0\in\N$ such that for every $n\ge n_0$ and $1\le p\le 2$
	$$f(\xi_1,\xi_2)<0$$

\end{lemma}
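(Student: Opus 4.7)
The plan is to mirror the proof of Lemma 4.8, this time using the expression for $f(\xi_1,\xi_2)$ supplied by Lemma \ref{lem:MaximizingPairDiagonalp<2}. By that lemma, $f(\xi_1,\xi_2)$ has the same sign as $\E h(g_1,g_2,g_3,g_4,\overline{g}_1,\overline{g}_2)\psi_{\theta_0}\overline{\psi}_{\theta_0}$, so I will apply Lemma 4.7 to decompose $n$ times this quantity into four summands and estimate each separately.

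First I will compute $\E h$. Since the $g_i,\overline{g}_j$ are i.i.d.\ symmetric random variables, every monomial in the $h$ of Lemma \ref{lem:MaximizingPairDiagonalp<2} containing a variable to an odd power vanishes, leaving
$$
\E h=\frac{1}{4}\E g_1^4-\frac{3}{4}(\E g_1^2)^2-\frac{\E S^{p-1}\E S^{p+3}}{(\E S^{p+1})^2}(\E g_1^2)^2=\frac{\Gamma(3/p)^2}{\Gamma(1/p)^2}\left[\frac{F(1/p)-3}{4}-\frac{\E S^{p-1}\E S^{p+3}}{(\E S^{p+1})^2}\right].
$$
For $1\le p\le 2$, Lemma \ref{lem:GammasSecondTerm} yields $F(1/p)\le F(1)=6$, so the first term inside the bracket is at most $3/4$, while Stirling's formula forces the Gamma ratio to tend to $1$ as $n\to\infty$. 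Therefore, for $n$ larger than some $n_0(p)$, the bracket is below $-1/8$, say, and hence $\E h\le -c(p)<0$ for a positive constant $c(p)$.

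Next I will estimate the four summands from Lemma 4.7. By Lemma \ref{LemmagS} the moments $\E g_1^{2k}$ for $k\le 4$ and the moment $\E|g_1|^{2p-2}$ are bounded by absolute constants uniformly on $p\in[1,2]$, which controls $\E h^2,\,\E|Y_4|^2$ and $\E|\overline{Y}_2|^2$ by absolute constants. Coupled with the two-sided estimate $\E|Z|,\,\E|\overline{Z}|\asymp\sqrt{n/p}$ from Lemma \ref{lem:EstimatePsi} and the Cauchy--Schwarz control of Lemma \ref{lem:GeneralProbabilisticLemma}, exactly as in the proof of Lemma 4.8, the leading term $\E h\cdot\E|Z|\cdot\E|\overline{Z}|$ is of order $-n/p$, the two cross summands are $O(\sqrt n/p)$, and the doubly truncated summand is $O(1/p)$. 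Collecting, one obtains
$$
np\,\E h(g_1,g_2,g_3,g_4,\overline{g}_1,\overline{g}_2)\psi_{\theta_0}\overline{\psi}_{\theta_0}\le -C_1\,n+C_2\sqrt n+C_3,
$$
which is negative for $n\ge n_0(p)$, giving $f(\xi_1,\xi_2)<0$.

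The main obstacle is the negativity of $\E h$: it hinges on the sharp bound $F(1/p)\le 6$ from Lemma \ref{lem:GammasSecondTerm}, which keeps $(F(1/p)-3)/4$ strictly below $1$ for all $p\in[1,2]$, so that the limiting value $1$ of the Gamma ratio produces a strictly negative bracket for $n$ large. Once this is in place, the four-term estimate is essentially a verbatim repetition of the argument in Lemma 4.8, and no new ingredients are needed.
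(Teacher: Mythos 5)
Your proposal is correct and follows the paper's own argument essentially verbatim: extract $\E h=\frac{1}{4}\E g_1^4-\frac{3}{4}(\E g_1^2)^2-\frac{\E S^{p-1}\E S^{p+3}}{(\E S^{p+1})^2}(\E g_1^2)^2$ from the formula of Lemma \ref{lem:MaximizingPairDiagonalp<2} by symmetry, bound it by a negative constant using $F(1/p)\le F(1)=6$ for $p\in[1,2]$, and then reuse the four-term decomposition of Lemma 4.7 with exactly the estimates from the proof of Lemma 4.8. The only cosmetic difference is that the paper bounds $\frac{\E S^{p-1}\E S^{p+3}}{(\E S^{p+1})^2}=\frac{\Gamma(1+\frac{n-1}{p})\Gamma(1+\frac{n+3}{p})}{\Gamma(1+\frac{n+1}{p})^2}\ge 1$ for every $n$ via log-convexity of $\log\Gamma$ (yielding the bound $6-3-4=-1$), whereas you only let this ratio tend to $1$ via Stirling; this costs nothing since $n$ must be taken large for the remaining summands anyway, and all your estimates are uniform in $p\in[1,2]$, so $n_0$ can indeed be chosen independent of $p$ as the statement requires.
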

\begin{proof}

By Lemma 4.8 we must compute the sign of $\E h(g_1,g_2,g_3,g_4,\overline{g}_1,\overline{g}_2)\psi_{\theta_0}\overline{\psi}_{\theta_0}$. For that matter we apply, Lemma 4.7 with the same choice of random variables and $h$ as above. The behaviour of the sign is determined by the sign of

\begin{eqnarray*}
\E h
&=&\frac{\Gamma\left(\frac{3}{p}\right)^2}{4\Gamma\left(\frac{1}{p}\right)^2}\left(F\left(\frac{1}{p}\right)-3-\frac{4\Gamma\left(1+\frac{n-1}{p}\right)\Gamma\left(1+\frac{n+3}{p}\right)}{\Gamma\left(1+\frac{n+1}{p}\right)^2}\right)\cr
&\leq&\frac{\Gamma\left(\frac{3}{p}\right)^2}{4\Gamma\left(\frac{1}{p}\right)^2}(6-3-4)=-\frac{\Gamma\left(\frac{3}{p}\right)^2}{4\Gamma\left(\frac{1}{p}\right)^2}.
\end{eqnarray*}
As in Proposition 4.3, we bound the terms in Lemma 4.7 in the same manner and conclude as before that for some absolute constants,\medskip\medskip

$\displaystyle\qquad\qquad
\frac{np\E S^{p+3}\E \psi_{\theta_0}\overline{\psi}_{\theta_0}}{\E S^{p-1}}f(\xi_1,\xi_2)\leq-C_1n+C_2\sqrt{n}+C_3\sqrt{n}+C_4.
$\end{proof}\smallskip

\begin{proof}[Proof of Corollary 1.2]
It follows from Lemmas 4.8 and 4.10 and Theorem 1.2.
\end{proof}

\begin{rmk} By a closer study it is possible to state the results of this section letting $p$ grow with $n$. Using the estimate 
$\frac{c'\sqrt n}{\sqrt p}\le\E\psi_{\theta_0}\le\frac{C'\sqrt  n}{\sqrt p},\, p\geq n$, proven in \cite{AB3}, our method works for at least $p\le cn^2$. However, by viewing the situation at $p=\infty$ mentioned in the introduction, we believe  Corollary 1.3 should hold for $C$ independent of $p$.
\end{rmk}


\begin{thebibliography}{}

\bibitem[ABR]{ABR}Abramowitz M., Stegun I.A.
\textit{Handbook of mathematical functions with formulas, graphs and mathematical tables.}
National Bureau of Standards. Applied Mathematics Series 55 (1964).

\bibitem[AB1]{AB1}{\sc D. Alonso-Guti\'errez, J. Bastero}
\textit{The variance conjecture on some polytopes}. In Asymptotic
Geometric Analysis, Proceedings of the Fall 2010, Fields Institute Thematic
Program, pp. 1--20,
(Springer, 2013)

\bibitem[AB2]{AB2}{\sc D. Alonso-Guti\'errez, J. Bastero}
\textit{Approaching the Kannan-Lov\'asz-Simonovits and variance conjectures}.
Lecture Notes in Mathematics, Springer. Monograph.
{\bf 2131}, (2015).

\bibitem[AB3]{AB3}{\sc D. Alonso-Guti\'errez, J. Bastero.}
\textit{The variance conjecture on hyperplane projections of $\ell_p^n$ balls.}
To appear in Revista Matem\'atica Iberoamericana.


\bibitem[ABP]{ABP} {\sc M. Anttila, K. Ball, I. Perissinaki},
\textit{The central limit problem for convex bodies}.
Trans. Amer. Math. Soc., \textbf{355}, no. 12, (2003), pp. 4723--4735.

\bibitem[BN]{BN} {\sc F. Barthe, A. Naor}, \textit{Hyperplane Projections of
the Unit Ball of $\ell_p^n$.}
Discrete and Computational Geometry {\bf 27} (2002), no. 2, 215--226.


\bibitem[M]{M}
{\sc M. Merkle}
\textit{Gurland's ratio for the gamma function.}
Comp. and Math. with Appl. \textbf{49},(2005), pp. 389--406.

\bibitem[NR]{NR} {\sc A. Naor and D. Romik}, \textit{Projecting the surface
measure of the sphere of $\ell_p^n$.} {Ann. Inst. H. Poincar\'e Probab.
Statist.} {\bf 39}, no 2. (2003) pp. 241--261.

\bibitem[SZ]{SZ} {\sc G. Schetmann and J. Zinn}, \textit{On the volume of
intersection of two $L_p^n$ balls.} Proceedings of the AMS {\bf 110}, no 1,
(1990) pp. 217--224.

\bibitem [W]{W} {\sc  J. O. Wojtaszczyk},  \textit{The square negative
correlation property for generalized
Orlicz balls}. Geometric aspects of functional analysis
Israel seminar,
Lecture Notes in Math., 1910, Springer, (2007), 305--313.

\end{thebibliography}
 \end{document}